%
%
%
%
%
\RequirePackage{fix-cm}
\documentclass[smallextended,envcountsect]{svjour3}      
\smartqed  
\usepackage{graphicx}
\usepackage{amsmath,amsfonts}
\usepackage{amssymb}
\usepackage{cases}
\usepackage[colorlinks,linkcolor=blue,anchorcolor=blue,citecolor=blue,urlcolor=blue]{hyperref}
\newcommand{\dom}{\text{dom}}
\newcommand{\epi}{\text{epi}}
\newcommand{\C}{\mathcal{C}}
\newcommand{\R}{\mathbb{R}}

\DeclareMathOperator*{\argmin}{argmin}
\newtheorem{assumption}{Assumption}

\usepackage[ruled,linesnumbered]{algorithm2e}
\usepackage{listings}
\lstset{language=Matlab}
\lstset{breaklines}
\lstset{extendedchars=false}
%
%
%
%
\begin{document}

\title{PGA-based Predictor-Corrector Algorithms for Monotone Generalized Variational Inequality
}

\titlerunning{PGA-based Predictor-Corrector Algorithms for MGVI}        

\author{Yu You
}


\institute{Yu You \at
              Shanghai Jiao Tong University, School of Mathematical Sciences, China \\
              \email{youyu0828@sjtu.edu.cn}           
}

\date{Received: date / Accepted: date}

\maketitle

\begin{abstract}
In this paper, we consider the  monotone generalized variational inequality (MGVI) where the monotone operator is Lipschitz continuous. Inspired by the extragradient method \cite{MR451121} and the projection contraction algorithms \cite{MR1418264,MR1297058} for monotone variational inequality (MVI), we propose a class of PGA-based Predictor-Corrector algorithms for MGVI. A significant characteristic of our algorithms for separable multi-blocks convex optimization problems is that they can be well adapted for parallel computation. Numerical simulations about different models for sparsity recovery show the wide applicability and effectiveness of our proposed methods.
\keywords{Monotone Generalized variational inequality \and Lipschitz continuous \and PGA-based Predictor-Corrector algorithms \and parallel computation }
\end{abstract}

\section{Introduction}
\label{intro}
Let $\theta :\R^n\rightarrow (-\infty,\infty]$ be a closed proper convex function, and $F:\R^n\rightarrow \R^n$ a vector-valued and continuous mapping. The generalized variational inequality (GVI) \cite{MR686212,MR1673631} takes the next form 
\begin{equation*}
\label{GVI}
x^* \in \R^n, ~\theta(x)-\theta(x^*) + (x-x^*)^TF(x^*) \geq 0, ~\forall x \in \R^n~.
\end{equation*}
Here, we keep the arithmetic operation that $\infty - \infty = \infty$. In this paper, we focus on the case where $F$ is monotone, i.e., $ (F(x)-F(y))^T(x-y) \geq 0$ for all $x,y\in \R^n$. In this case, the corresponding \text{GVI} is called monotone generalized variational inequality (\text{MGVI}). Throughout this paper, we make the following additional assumptions for $\text{MGVI}(\theta,F)$:
\begin{assumption}
	\rm
	\label{assum:1}
	\begin{enumerate}
		\item[(a)] The monotone operator $F$ is Lipschitz continuous with Lipschitz constant $L>0$.
		\item[(b)] The solution set of $\text{MGVI}(\theta,F)$, denoted as $\Omega^*$, is nonempty.
	\end{enumerate}
\end{assumption} 
Various convex optimization problems can be formulated as MGVIs, such as lasso problem, basis pursuit problem \cite{MR1854649}, basis pursuit denoising problem \cite{MR2481111}, and Dantzig selector \cite{MR2382651}, to name a few. Moreover, MGVI contains the classical monotone variational inequality (MVI). One sees this by setting $\theta = \delta_{\C}$\footnote{$\delta_{\C}$ is the indicator function of $\C$ which is equal to $0$ if $x\in\C$ and $\infty$ otherwise.} with $\C \subseteq \R^n$ being a nonempty closed convex set, then MGVI reduces to MVI in form of 
\begin{equation*}
\label{MVI}
x^* \in \C,~ (x-x^*)^TF(x^*) \geq 0,~ \forall x \in \C~.
\end{equation*}
For solving $\text{MVI}(\theta,F,\C)$, the extragradient method \cite{MR451121} can be applied. Specifically, at iteration $k$, in predictor step, the projection operator is used for generating a predictor $\tilde{x}^k = \text{P}_{\C}(x^k-\beta^k F(x^k))$ where $\beta^k$ is selected to satisfy some given condition, then in corrector step, $x^{k+1} = \text{P}_{\C}(x^k-\beta^k F(\tilde{x}^k))$. This method yields a sequence converging to a solution of $\text{MVI}(\theta,F,\C)$.  On the other hand, He \cite{MR1418264,MR1297058} proposed a class of projection and contraction methods for $\text{MVI}(\theta,F,\C)$, which also belongs to a class of Predictor-Corrector algorithms. In predictor step, the predictor is the same with the above extragradient method as  $\tilde{x}^k = \text{P}_{\C}(x^k-\beta^k F(x^k))$, then the corrector step is based on the next three fundamental inequalities for constructing different profitable directions:
\begin{equation}
\label{eq:1_1}
(\tilde{x}^k-x^*)^T\beta^k F(x^*) \geq 0, 
\end{equation}
\vspace{-0.5cm}
\begin{equation}
\label{eq:1_2}
(\tilde{x}^k-x^*)^T([x^k-\beta^k F(x^k)] -\tilde{x}^k) \geq 0,
\end{equation}
\vspace{-0.5cm}
\begin{align}
\label{eq:1_3}
(\tilde{x}^k-x^*)^T(\beta^k F(\tilde{x}^k)-\beta^k F(x^*)) \geq 0,
\end{align}
where $x^*$ is any solution of $\text{MVI}(\theta,F,\C)$, and $\beta^k >0$.
Numerical simulations therein \cite{MR1418264,MR1297058} show that the methods proposed by He outperformed the extragradient method in general. 

Inspired by both of their works for $\text{MVI}(\theta,F,\C)$, we generalize their ideas for $\text{MGVI}(\theta,F)$. Firstly, for generalizing the idea of extragradient method for $\text{MGVI}(\theta,F)$, at iteration $k$ and in predictor step, we use the proximity operator to yield a predictor $\tilde{x}^k = \text{Prox}_{\beta^k \theta}(x^k-\beta^kF(x^k))$, where $\beta^k$ is selected to meet some given condition, then in corrector step, we set $x^{k+1}$ as $\text{Prox}_{\beta^k \theta}(x^k-\beta^kF(\tilde{x}^k))$. The resulting algorithm can be deemed as a natural generalization of extragradient method (EM) for $\text{MGVI}(\theta,F)$. Thus we denote our algorithm as GEM. The difference between GEM and EM is the different operator used in both predictor and corrector steps. Surprisingly, this kind of modification does not change the key inequality for the convergence analysis of GEM compared with that for EM.
Secondly, for generalizing the idea of He's projection and contraction algorithms, we also use the proximity operator to make a predictor $\tilde{x}^k =\text{Prox}_{\beta^k \theta}(x^k-\beta^kF(x^k))$, then we likewise obtain three fundamental inequalities as follows:
\begin{equation}
\label{eq:1_4}
\beta^k\theta(\tilde{x}^k) - \beta^k \theta(x^*) + (\tilde{x}^k-x^*)^T\beta^k F(x^*) \geq 0, 
\end{equation}
\begin{equation}
\label{eq:1_5}
\beta^k \theta(x^*) -\beta^k\theta(\tilde{x}^k) + (\tilde{x}^k-x^*)^T([x^k-\beta^k F(x^k)] -\tilde{x}^k) \geq 0,
\end{equation}
\begin{align}
\label{eq:1_6}
(\tilde{x}^k-x^*)^T(\beta^k F(\tilde{x}^k)-\beta^k F(x^*)) \geq 0,
\end{align} 
where $x^*$ is any solution of $\text{MGVI}(\theta,F)$, and $\beta^k >0$. We observe that the leftmost terms of (\ref{eq:1_4}) and (\ref{eq:1_5}) are opposite to each other, thus they can be eliminated when adding (\ref{eq:1_4}) and (\ref{eq:1_5}), or when adding (\ref{eq:1_4}), (\ref{eq:1_5}) and (\ref{eq:1_6}). This yields the same results with that when adding (\ref{eq:1_1}) and (\ref{eq:1_2}), or adding (\ref{eq:1_1}), (\ref{eq:1_2}), and (\ref{eq:1_3}). The latter is used by He in corrector step for constructing projection and contraction algorithms for $\text{MVI}(\theta,F,\C)$. Thus, the methodologies developed by He for $\text{MVI}(\theta,F,\C)$ can be straightforwardly inherited. This is an inspiring result. The only difference between He's works for $\text{MVI}(\theta,F,\C)$ and our proposed algorithms for $\text{MGVI}(\theta,F)$ will be the different predictors, which differ a little. Here, it should be mentioned that He et al. \cite{MR2902659,MR3049509,MR3128749,MR2500836} developed diverse PPA-based contraction methods for $\text{MGVI}(\theta,F)$, one can also refer to \cite{MR3618786} for a review about the works of He et al. in $\text{MVI}(\theta,F,\C)$ and $\text{MGVI}(\theta,F)$. Although their PPA-based contraction algorithms possess the contraction properties, the various corrector strategies developed by them for  $\text{MVI}(\theta,F,\C)$ are not used. Thus our proposed algorithms for $\text{MGVI}(\theta,F)$ differ from theirs. Part of our contribution in this paper is to construct a  direct link between their previous works in $\text{MVI}(\theta,F,\C)$ and later works in $\text{MGVI}(\theta,F)$. The above two generalizations can be treated as Predictor-Corrector algorithms, and both of them use the proximity operator for making predictors, thus we call our algorithms as PGA-based Predictor-Corrector algorithms. 

Note that in order to make our algorithms to be implementable in practice, the proximity operator of $\theta$ should be easy to compute. The reader can refer to \cite{MR3616647,MR3719240,MR2858838} for a wide variety of computable proximity operators. Finally, we mention that for the structured convex optimization problem with a separable objective function and linear constraint, our proposed algorithms can be well adapted for parallel computation, which can not be realized by many algorithms related to ADMM. This is a significant characteristic of our algorithms, allowing lesser time exhausted by employing the technique of parallel computation, especially in case of large-scale multi-blocks separable problems.

The rest of this paper is organized as follows.  In section \ref{sec:2}, we recall some notations and preliminaries. Then in section \ref{sec:3}, we will introduce the first class of  our proposed PGA-based Predictor-Corrector algorithms, which is a natural generalization of extragradient method. Next in section \ref{sec:4}, the second class of our algorithms will be introduced. In section \ref{sec:5}, we will discuss and compare our proposed algorithms with ADMM-related algorithms with respect to a concrete convex optimization problem. Finally in section \ref{sec:6}, we test our proposed algorithms for different sparsity recovery models.

\section{Notations and preliminaries}
\label{sec:2}
Let $\R^n$ denote the $n$-dimensional Euclidean space with standard inner product $\langle \cdot, \cdot \rangle$, i.e., $\langle x, y \rangle = x^Ty$ for any $x,y\in \R^n$, and $\Vert \cdot \Vert$ denote the Euclidean norm. Given a function $f:\R^n \rightarrow (-\infty,\infty]$, the set  $$\dom f := \{x\in \R^n:f(x) < \infty \}$$ denotes its effective domain. If $\dom f \neq \emptyset$, then $f$ is called a proper function (or proper). The set
$$\epi f:=\{(x,t):f(x)\leq t,x\in \R^n,t\in \R\}$$ denotes the epigraph of $f$, and $f$ is closed (resp. convex) if $\epi f$ is closed (resp. convex). The function $f$ is called a closed proper convex function if it is proper, closed and convex.

Now, let $f:\R^n\rightarrow(-\infty,\infty]$ be a closed proper convex function, the proximity operator of $f$ is defined as follows: \[\text{Prox}_f :x \mapsto \argmin\{f(y)+\frac{1}{2}\Vert x-y\Vert^2:y\in\R^n\},\] the optimal solution is unique. If $f = \delta_{\C}$ with $\C$ being a nonempty closed convex set, then the above proximity operator reduces to the usual projection operator $\text{P}_{\C}$. 

\begin{lemma}[\cite{MR3719240}]
	\label{lem:1}
	Let $f:\R^n \rightarrow (-\infty,\infty]$ be a closed proper convex function. Then the next three conditions are equivalent:
	\begin{enumerate}
		\item $p = \text{Prox}_f(x)$.
		\item $x-p \in \partial f(p)$.
		\item For all $w \in \R^n$, it holds that 
		\begin{equation*}
		(x-p)^T(p-w) \geq f(p) - f(w).
		\end{equation*}
		
	\end{enumerate}
\end{lemma}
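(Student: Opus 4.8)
The plan is to establish the chain of equivalences by proving $(1)\Leftrightarrow(2)$ and $(2)\Leftrightarrow(3)$ separately, since condition $(2)$ is the natural hinge. For $(1)\Leftrightarrow(2)$, I would start from the defining minimization problem $p=\argmin_{y\in\R^n}\bigl\{f(y)+\frac12\Vert x-y\Vert^2\bigr\}$. Because $f$ is closed proper convex and the quadratic term $g(y):=\frac12\Vert x-y\Vert^2$ is finite and strongly convex on all of $\R^n$, the objective $f+g$ is closed proper strongly convex, so its minimizer exists and is unique, and by Fermat's rule $p$ minimizes $f+g$ if and only if $0\in\partial(f+g)(p)$. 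Since $g$ is differentiable everywhere, the subdifferential sum rule applies without any constraint qualification and yields $\partial(f+g)(p)=\partial f(p)+\nabla g(p)=\partial f(p)+(p-x)$. Hence $p=\text{Prox}_f(x)$ is equivalent to $0\in\partial f(p)+(p-x)$, i.e.\ to $x-p\in\partial f(p)$, which is exactly $(2)$.

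For $(2)\Leftrightarrow(3)$, I would merely unfold the definition of the subdifferential: $x-p\in\partial f(p)$ means $f(w)\ge f(p)+(x-p)^T(w-p)$ for every $w\in\R^n$, and rewriting $(x-p)^T(w-p)=-(x-p)^T(p-w)$ turns this into $(x-p)^T(p-w)\ge f(p)-f(w)$ for all $w$, which is precisely $(3)$; the converse direction is the same computation read in reverse.

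The argument is short, and the only point requiring care is the use of the subdifferential sum rule in $(1)\Leftrightarrow(2)$. I expect this to be the main (and essentially only) obstacle, but it is dispatched by the standard fact that $\partial(f_1+f_2)=\partial f_1+\partial f_2$ whenever one of the two functions is finite and continuous — here even smooth — at some point of the other's effective domain, a condition trivially met since $g$ is finite and differentiable on all of $\R^n$. Everything else (existence and uniqueness of the minimizer, Fermat's rule, the subgradient inequality defining $\partial f$) is elementary convex analysis, so no further difficulty is anticipated.
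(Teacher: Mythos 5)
Your argument is correct: the equivalence $(1)\Leftrightarrow(2)$ via Fermat's rule and the sum rule (valid here because $y\mapsto\frac12\Vert x-y\Vert^2$ is finite, convex and differentiable on all of $\R^n$, so no constraint qualification issue arises), and $(2)\Leftrightarrow(3)$ by unfolding the subgradient inequality, is the standard proof. The paper itself offers no proof of this lemma --- it is quoted from the cited reference \cite{MR3719240} --- and your derivation matches the one given there, so there is nothing to reconcile.
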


For any $\beta > 0$, let $e(x,\beta):= x -\text{Prox}_{\beta\theta}(x-\beta F(x))$. The next theorem shows that $x^*\in \R^n$ is a solution of $\text{MGVI}(\theta,F)$ if and only if $e(x,\beta)=0$. 
\begin{lemma}
	For any $\beta > 0$, $x^*\in \R^n$ is a solution of $\text{MGVI}(\theta,F)$ if and only if $e(x,\beta)=0$. 
\end{lemma}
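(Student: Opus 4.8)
The plan is to reduce the claim directly to Lemma~\ref{lem:1}. By the definition of $e$, the condition $e(x^*,\beta)=0$ says exactly that $x^* = \text{Prox}_{\beta\theta}(x^*-\beta F(x^*))$, so it suffices to show that this fixed-point identity is equivalent to $x^*$ solving $\text{MGVI}(\theta,F)$.

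First I would apply Lemma~\ref{lem:1} with $f = \beta\theta$ (again closed proper convex since $\beta>0$), with the point $x := x^*-\beta F(x^*)$ and the candidate $p := x^*$. The equivalence of conditions~(1) and~(3) there then states that $x^* = \text{Prox}_{\beta\theta}(x^*-\beta F(x^*))$ if and only if, for every $w\in\R^n$,
\[
\big((x^*-\beta F(x^*)) - x^*\big)^T(x^* - w) \;\geq\; \beta\theta(x^*) - \beta\theta(w).
\]
The left-hand side equals $-\beta F(x^*)^T(x^*-w) = \beta(w-x^*)^TF(x^*)$, so after rearranging, the displayed inequality is equivalent to
\[
\beta\theta(w) - \beta\theta(x^*) + \beta(w-x^*)^TF(x^*) \;\geq\; 0 \qquad \text{for all } w\in\R^n.
\]
Since $\beta>0$, dividing by $\beta$ is an equivalence-preserving operation, and this last inequality is precisely the definition of $\text{MGVI}(\theta,F)$ with $w$ playing the role of the test vector $x$. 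Reading the chain of ``if and only if'' statements in both directions gives the result.

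The only point requiring a remark rather than a calculation is the extended-value convention $\infty-\infty=\infty$. Note that $x^* = \text{Prox}_{\beta\theta}(\cdot)$ automatically lies in $\dom\theta$, so $\theta(x^*)$ is finite; hence for $w\notin\dom\theta$ both the inequality in Lemma~\ref{lem:1}(3) and the MGVI inequality hold trivially (the relevant $\theta(w)$ term being $+\infty$ on the favorable side), while for $w\in\dom\theta$ all quantities are finite and the manipulations above are ordinary real arithmetic. I do not anticipate a genuine obstacle here: the statement is essentially a restatement of Lemma~\ref{lem:1} after clearing the factor $\beta$ and tidying signs, and the main thing to be careful about is keeping each step of the equivalence visibly reversible.
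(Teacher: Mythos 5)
Your proof is correct and follows essentially the same route as the paper: a direct reduction to Lemma~\ref{lem:1} applied to $f=\beta\theta$ at the point $x^*-\beta F(x^*)$. The only (cosmetic) difference is that you invoke the equivalence of conditions (1) and (3) of that lemma, whereas the paper passes through condition (2), rewriting the MGVI condition as $-F(x^*)\in\partial\theta(x^*)$; both yield the same one-step argument.
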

\begin{proof}
	If $x^*$ is a solution of $\text{MGVI}(\theta,F)$, then the definition of $\text{MGVI}(\theta,F)$ implies $-F(x^*) \in \partial \theta(x^*)$, that is same with $[x^* -\beta F(x^*)]-x^* \in \partial \beta \theta (x^*)$, then Lemma \ref{lem:1} implies that 
	\[\text{Prox}_{\beta \theta}(x^*-\beta F(x^*)) = x^*,\]
	thus $e(x,\beta) = 0$. Now, let $e(x,\beta) = 0$, this implies $\text{Prox}_{\beta \theta}(x^*-\beta F(x^*)) = x^*$, then it follows from Lemma \ref{lem:1} that $-F(x^*) \in \partial \theta(x^*)$, thus $x^*$ is a solution of $\text{MGVI}(\theta,F)$. \qed
\end{proof}
Thus, solving $\text{MGVI}(\theta,F)$ reduces to solving a zero point of $e(x,\beta)$. So, for given $\beta > 0$, $\Vert e(x,\beta)\Vert$ can be treated as a error measure function. Next we will show that  $\Vert e(x,\beta)\Vert$ is a non-decreasing function about $\beta$, while $\Vert e(x,\beta)\Vert/\beta$ is a non-increasing function about $\beta$. This show that $e(x,\beta)$ can be used for stopping criterion when implementing algorithms. The next result can be seen by a simple modification of notations in the proof of Theorem 10.9 in \cite{MR3719240}. 


\begin{lemma}[\cite{MR3719240}]
	For all $x\in \R^n$, and $\beta_2 \geq \beta_1 >0$, it holds that 
	\begin{equation}
	\label{eq:2_1}
	\Vert e(x,\beta_2)\Vert \geq 	\Vert e(x,\beta_1)\Vert
	\end{equation}
	and 
	\begin{equation}
	\label{eq:2_2}
	\frac{\Vert e(x,\beta_2)\Vert}{\beta_2} \leq 	\frac{\Vert e(x,\beta_1)\Vert}{\beta_1}.
	\end{equation}
\end{lemma}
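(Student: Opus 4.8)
The plan is to follow the standard approach for monotonicity properties of the natural residual map in proximal methods, exploiting the firm nonexpansiveness of $\text{Prox}_{\beta\theta}$ together with the subdifferential characterization in Lemma~\ref{lem:1}. Write $p_i := \text{Prox}_{\beta_i\theta}(x-\beta_i F(x))$ for $i=1,2$, so that $e(x,\beta_i) = x - p_i$. The key is to extract variational inequalities from part~(3) of Lemma~\ref{lem:1}: since $p_i = \text{Prox}_{\beta_i\theta}(x-\beta_i F(x))$, we have for all $w\in\R^n$
\begin{equation*}
([x-\beta_i F(x)] - p_i)^T(p_i - w) \geq \beta_i\theta(p_i) - \beta_i\theta(w).
\end{equation*}
I would then specialize $w = p_2$ in the inequality for $i=1$, and $w = p_1$ in the inequality for $i=2$, and add the two. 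The $\theta$-terms combine into $(\beta_1-\beta_2)(\theta(p_1)-\theta(p_2))$, and the rest is an inner-product expression in $x$, $p_1$, $p_2$, $F(x)$.

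Next I would rearrange the sum to isolate $\|e(x,\beta_1) - e(x,\beta_2)\|^2 = \|p_2 - p_1\|^2$ and the cross term $e(x,\beta_1)^T e(x,\beta_2) = (x-p_1)^T(x-p_2)$. The goal is to derive a relation of the form
\begin{equation*}
(x-p_1)^T(x-p_2) \geq \|x - p_1\|^2 \cdot \frac{\beta_1}{\beta_2} \quad\text{or similar},
\end{equation*}
more precisely something like $\beta_2(x-p_1)^T(x-p_2) \geq \beta_1\|x-p_1\|^2$ after using the $\theta$-terms' sign and rescaling. Once such an inequality is in hand, (\ref{eq:2_1}) follows from Cauchy--Schwarz: $\|e(x,\beta_1)\|^2 \le \frac{\beta_2}{\beta_1} e(x,\beta_1)^T e(x,\beta_2) \le \frac{\beta_2}{\beta_1}\|e(x,\beta_1)\|\,\|e(x,\beta_2)\|$, hence $\beta_1\|e(x,\beta_1)\| \le \beta_2\|e(x,\beta_2)\|$, which gives both (\ref{eq:2_2}) directly and, combined with $\beta_2\geq\beta_1$, also (\ref{eq:2_1}) provided the dependence works out the right way. (One may need the symmetric pairing to get the clean $\|e(x,\beta_2)\|\ge\|e(x,\beta_1)\|$ without the $\beta$ ratio.)

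The main obstacle I anticipate is handling the nonsmooth $\theta$-terms correctly: the cross terms from Lemma~\ref{lem:1}(3) produce $\beta_i\theta(p_i) - \beta_i\theta(p_j)$ contributions with mismatched coefficients $\beta_1$ versus $\beta_2$, so they do not cancel outright the way they would if $\beta_1 = \beta_2$. The trick — and this is presumably the "simple modification of notations" the authors allude to from Theorem~10.9 of \cite{MR3719240} — is to divide each inequality by its own $\beta_i$ before adding (i.e. work with $\frac{1}{\beta_i}([x-\beta_iF(x)]-p_i)^T(p_i-w) \ge \theta(p_i)-\theta(w)$), so that the $\theta(p_1)$ and $\theta(p_2)$ terms appear with equal weight and cancel when the two inequalities are summed. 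After that cancellation one is left with a purely Hilbertian inequality among $x,p_1,p_2,F(x)$; the $F(x)$ contributions should also telescope since $F(x)$ is the same vector in both. Expanding and collecting the norm-squared and inner-product terms then yields the desired monotonicity estimates, and I would finish by the Cauchy--Schwarz step sketched above, treating the two displayed inequalities (\ref{eq:2_1}) and (\ref{eq:2_2}) as the two faces of the single scalar inequality $\beta_1\|e(x,\beta_1)\| \le \beta_2\|e(x,\beta_2)\|$ together with $\|e(x,\beta_1)\|\le\|e(x,\beta_2)\|$.
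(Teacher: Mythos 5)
Your overall strategy is the right one, and it is essentially all the ``proof'' the paper itself supplies: the authors do not prove this lemma but defer to the proof of Theorem 10.9 in \cite{MR3719240}, and the mechanism you identify --- apply Lemma \ref{lem:1}(3) to $p_i=\text{Prox}_{\beta_i\theta}(x-\beta_iF(x))$, divide each inequality by its own $\beta_i$ so that the $\theta$-terms carry equal weight, cross-substitute $w=p_2$ and $w=p_1$, and add so that both the $\theta$-terms and the $F(x)$-terms cancel --- is exactly that argument. However, your endgame does not work as written, and this is a genuine gap rather than a notational slip. First, the summation does not produce a one-sided relation such as $\beta_2(x-p_1)^T(x-p_2)\ge\beta_1\Vert x-p_1\Vert^2$; writing $e_i=e(x,\beta_i)$ and using $p_1-p_2=e_2-e_1$, it produces the symmetric inequality
\begin{equation*}
\Bigl(\tfrac{1}{\beta_1}+\tfrac{1}{\beta_2}\Bigr)e_1^Te_2\ \ge\ \tfrac{1}{\beta_1}\Vert e_1\Vert^2+\tfrac{1}{\beta_2}\Vert e_2\Vert^2.
\end{equation*}
Second, and more importantly, the single scalar inequality you propose to finish with, $\beta_1\Vert e_1\Vert\le\beta_2\Vert e_2\Vert$, is \emph{not} (\ref{eq:2_2}): inequality (\ref{eq:2_2}) reads $\beta_1\Vert e_2\Vert\le\beta_2\Vert e_1\Vert$, with the opposite pairing of indices, and $\beta_1\Vert e_1\Vert\le\beta_2\Vert e_2\Vert$ is merely a trivial consequence of (\ref{eq:2_1}) and $\beta_1\le\beta_2$, so it cannot deliver (\ref{eq:2_2}).

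The correct finish from the display above is short. Cauchy--Schwarz gives $\bigl(\tfrac{1}{\beta_1}+\tfrac{1}{\beta_2}\bigr)\Vert e_1\Vert\,\Vert e_2\Vert\ge\tfrac{1}{\beta_1}\Vert e_1\Vert^2+\tfrac{1}{\beta_2}\Vert e_2\Vert^2$, which rearranges and factors as
\begin{equation*}
\bigl(\Vert e_2\Vert-\Vert e_1\Vert\bigr)\Bigl(\tfrac{\Vert e_1\Vert}{\beta_1}-\tfrac{\Vert e_2\Vert}{\beta_2}\Bigr)\ \ge\ 0.
\end{equation*}
If either factor were strictly negative, the other would have to be nonpositive; but $\tfrac{\Vert e_1\Vert}{\beta_1}\le\tfrac{\Vert e_2\Vert}{\beta_2}$ together with $\beta_1\le\beta_2$ forces $\Vert e_1\Vert\le\tfrac{\beta_1}{\beta_2}\Vert e_2\Vert\le\Vert e_2\Vert$, which is incompatible with $\Vert e_2\Vert<\Vert e_1\Vert$ (and symmetrically in the other case). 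Hence both factors are nonnegative, and these are precisely (\ref{eq:2_1}) and (\ref{eq:2_2}). With this replacement for your last paragraph the proof is complete and coincides with the argument of Theorem 10.9 in \cite{MR3719240}.
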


Before ending this part, for simplifying the convergence analysis of our proposed algorithms, the definition of Fej\'{e}r monotone sequence and its related properties are recalled.

\begin{definition}[Fej\'{e}r monotone sequence] 
	Let $\mathcal{D} \subseteq \R^n$ be nonempty, and let $\{x^k\}_k$ be a sequence in $\R^n$, then $\{x_k\}_k$ is Fej\'{e}r monotone with respect to $\mathcal{D}$ if for all $k$ and any $x\in \mathcal{D}$, it holds that
	$$ \Vert x^{k+1}-x\Vert \leq \Vert x^k -x\Vert.$$ 
\end{definition}

\begin{lemma}[\cite{MR3616647}]
	\label{lem:4}
	Let $\mathcal{D} \subseteq \R^n$ be nonempty, and let $\{x^k\}_k$ be a Fej\'{e}r monotone sequence with respect to $\mathcal{D}$, then the following hold
	\begin{itemize}
		\item $\{x^k\}_k$ is bounded;
		\item If any subsequence of $\{x^k\}_k$ converges to a point in $\mathcal{D}$, then $\{x^k\}_k$ converges to a point in $\mathcal{D}$. 
	\end{itemize}
\end{lemma}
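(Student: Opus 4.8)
The plan is to extract the single consequence of Fej\'{e}r monotonicity that drives both conclusions: for every fixed $x \in \mathcal{D}$, the real-valued sequence $\{\Vert x^k - x\Vert\}_k$ is nonincreasing, hence convergent since it is bounded below by $0$. Once this is in hand, both bullets follow with essentially no further work.

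For the first bullet I would invoke nonemptiness of $\mathcal{D}$ to fix some $\bar{x} \in \mathcal{D}$. Iterating the defining inequality $\Vert x^{k+1} - \bar{x}\Vert \leq \Vert x^k - \bar{x}\Vert$ gives $\Vert x^k - \bar{x}\Vert \leq \Vert x^0 - \bar{x}\Vert$ for all $k$, and then the triangle inequality yields
\[
\Vert x^k\Vert \leq \Vert x^k - \bar{x}\Vert + \Vert \bar{x}\Vert \leq \Vert x^0 - \bar{x}\Vert + \Vert \bar{x}\Vert,
\]
a bound independent of $k$. Hence $\{x^k\}_k$ is bounded.

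For the second bullet, suppose a subsequence $\{x^{k_j}\}_j$ converges to a point $\bar{x} \in \mathcal{D}$. Because $\bar{x}$ lies in $\mathcal{D}$, the full sequence $\{\Vert x^k - \bar{x}\Vert\}_k$ is nonincreasing and bounded below, so it converges to some $\ell \geq 0$. Along the given subsequence we have $\Vert x^{k_j} - \bar{x}\Vert \to 0$; since every subsequence of a convergent sequence shares its limit, this forces $\ell = 0$. Consequently $\Vert x^k - \bar{x}\Vert \to 0$, that is, the entire sequence $\{x^k\}_k$ converges to $\bar{x} \in \mathcal{D}$.

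I do not anticipate a genuine obstacle here, as this is a classical fact about Fej\'{e}r monotone sequences. The only step meriting a moment of care is the final logical point in the second bullet: identifying the limit $\ell$ of the monotone sequence with the subsequential limit $0$. This is precisely where the monotonicity supplied by the Fej\'{e}r property is indispensable, since without it a convergent subsequence would not pin down the behavior of the whole sequence.
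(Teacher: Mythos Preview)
Your argument is correct and is in fact the standard proof of this classical fact. Note, however, that the paper does not supply its own proof of this lemma: it is stated with a citation to \cite{MR3616647} and left unproved, so there is no in-paper argument to compare against. Your write-up would serve perfectly well as the omitted justification.
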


\section{Generalized Extragradient Method for $\text{MGVI}(\theta,F)$}
\label{sec:3}
In this section, we will introduce the first class of our PGA-based Predictor-Corrector algorithms for solving $\text{MGVI}(\theta,F)$. The motivation for our algorithm is the extragradient method (EM) \cite{MR451121}, which is designed for solving $\text{MVI}(\theta,F,\C)$. Our algorithm for $\text{MGVI}(\theta,F)$ can be deemed as a natural generalization of EM, and is denoted by us as GEM. The difference of GEM from EM is that the proximity operator is used for both of the predictor and corrector steps, while projector operator is used in EM. Next, we will show firstly GEM for $\text{MGVI}(\theta,F)$ in Algorithm \ref{algo:GEM}, followed by the its convergence. 
\begin{algorithm}[ht]
	\caption{GEM for $\text{MGVI}(\theta,F)$}
	\label{algo:GEM}
	\KwIn{$x^0 \in \R^n$, and $\nu \in (0,1)$.}
	
	\For{$k=0, 1, \cdots$}
	{   
		\textbf{Predictor:} Selecting $\beta^k >0$ such that $\beta^k\Vert F(x^k)-F(\tilde{x}^k)\Vert \leq \nu \Vert x^k-\tilde{x}^k\Vert$ where $\tilde{x}^k = \text{Prox}_{\beta^k \theta}(x^k-\beta^k F(x^k))$ ;
		
		\textbf{Corrector:} Setting $x^{k+1} = \text{Prox}_{\beta^k \theta}(x^k-\beta^k F(\tilde{x}^k))$.		
	}
	
\end{algorithm} 

\begin{remark}
	Under the second term of Assumption \ref{assum:1}, we conclude that $$\beta^k\Vert F(x^k)-F(\tilde{x}^k)\Vert \leq \nu \Vert x^k-\tilde{x}^k\Vert$$ whenever $\beta^k \leq \frac{\nu}{L}$.
\end{remark}

In order to prove the convergence of GEM, we need the next propositions.
\begin{proposition}
	\label{prop1}
	Let $x, \tilde{x}, y \in \R^n$, $\beta >0$, and let $$\widetilde{T}_{\beta}(x) := \text{Prox}_{\beta \theta}(x-\beta F(\tilde{x})),$$ then the following inequality holds 
	\[\Vert \widetilde{T}_{\beta}(x) -y\Vert^2\leq \Vert x-y\Vert^2 -\Vert x-\widetilde{T}_{\beta}(x)\Vert^2 + 2\beta(y-\widetilde{T}_{\beta}(x))^T F(\tilde{x}) + 2\beta( \theta(y)- \theta(\widetilde{T}_{\beta}(x))).   \]
\end{proposition}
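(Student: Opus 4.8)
The plan is to apply the third characterization of the proximity operator in Lemma~\ref{lem:1} and then convert the resulting inner-product inequality into the desired norm inequality via the elementary expansion $\Vert a+b\Vert^2 = \Vert a\Vert^2 + 2a^Tb + \Vert b\Vert^2$.

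First I would abbreviate $p := \widetilde{T}_{\beta}(x) = \text{Prox}_{\beta\theta}(x-\beta F(\tilde{x}))$. Applying Lemma~\ref{lem:1} with the closed proper convex function $\beta\theta$, the point $x-\beta F(\tilde{x})$ in place of $x$, and the arbitrary vector $w = y$, one obtains
\[
\big((x-\beta F(\tilde{x})) - p\big)^T(p - y) \geq \beta\theta(p) - \beta\theta(y).
\]
Separating the $F(\tilde{x})$ contribution on the left-hand side gives
\[
(x - p)^T(p-y) \geq \beta\theta(p) - \beta\theta(y) + \beta F(\tilde{x})^T(p - y).
\]

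Next I would expand $\Vert x-y\Vert^2 = \Vert(x-p)+(p-y)\Vert^2 = \Vert x-p\Vert^2 + 2(x-p)^T(p-y) + \Vert p-y\Vert^2$, which lets me substitute $2(x-p)^T(p-y) = \Vert x-y\Vert^2 - \Vert x-p\Vert^2 - \Vert p-y\Vert^2$ after doubling the displayed inequality. This yields
\[
\Vert x-y\Vert^2 - \Vert x-p\Vert^2 - \Vert p-y\Vert^2 \geq 2\beta\theta(p) - 2\beta\theta(y) + 2\beta F(\tilde{x})^T(p-y),
\]
and isolating $\Vert p-y\Vert^2$, then rewriting $-2\beta F(\tilde{x})^T(p-y) = 2\beta(y-p)^T F(\tilde{x})$ and $-2\beta\theta(p)+2\beta\theta(y) = 2\beta(\theta(y)-\theta(p))$, gives precisely the claimed bound once $p$ is written back as $\widetilde{T}_{\beta}(x)$.

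There is no genuine obstacle here: the statement is essentially the standard "prox inequality" with the gradient-like term $F(\tilde{x})$ carried along. The only points requiring care are the bookkeeping of the factor $\beta$ and the sign when pulling $\beta F(\tilde{x})$ out of the prox argument, and substituting the cross term $2(x-p)^T(p-y)$ with the correct sign; everything else is a one-line rearrangement.
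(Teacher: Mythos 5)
Your proof is correct, and it reaches the stated inequality by a slightly different route than the paper. You invoke the variational characterization of the proximity operator (item~3 of Lemma~\ref{lem:1}, applied to $f=\beta\theta$ at the point $x-\beta F(\tilde{x})$) and then convert the cross term via the three-point identity $\Vert x-y\Vert^2=\Vert x-p\Vert^2+2(x-p)^T(p-y)+\Vert p-y\Vert^2$. The paper instead views $\widetilde{T}_{\beta}(x)$ as the minimizer of $\phi(u)=(u-x)^TF(\tilde{x})+\theta(u)+\frac{1}{2\beta}\Vert u-x\Vert^2$ and applies the $\frac{1}{\beta}$-strong-convexity growth bound $\phi(y)-\phi(\widetilde{T}_{\beta}(x))\geq\frac{1}{2\beta}\Vert y-\widetilde{T}_{\beta}(x)\Vert^2$ (Theorem~5.25 of \cite{MR3719240}), then expands $\phi(y)-\phi(\widetilde{T}_{\beta}(x))$ explicitly. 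The two arguments are mathematically equivalent --- the strong-convexity bound for the prox objective and the subgradient inequality $x-\beta F(\tilde{x})-p\in\partial(\beta\theta)(p)$ encode the same information, with the quadratic term supplying exactly the $\Vert p-y\Vert^2$ that your polarization identity produces. Your version has the small advantage of relying only on Lemma~\ref{lem:1}, which is already stated in the paper, rather than importing an external strong-convexity theorem; the paper's version avoids the explicit expansion of the cross term. Your sign bookkeeping (pulling $\beta F(\tilde{x})$ out of the prox argument and substituting $2(x-p)^T(p-y)$) is handled correctly throughout.
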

\begin{proof}
	Consider the function
	\[\phi(u) = (u-x)^TF(\tilde{x}) + \theta(u)+\frac{1}{2\beta}\Vert u-x\Vert^2.\] 
	Because $\widetilde{T}_{\beta}(x) = \argmin\{\phi(u):u\in \R^n\}$, and $\phi$ is a  $\frac{1}{\beta}$-strongly closed proper convex function, then it follows from Theorem 5.25 in \cite{MR3719240} that 
	\begin{equation}
	\label{prop:eq1}
	\phi(y)-\phi(\widetilde{T}_{\beta}(x)) \geq \frac{1}{2\beta}\Vert y-\widetilde{T}_{\beta}(x)\Vert^2,
	\end{equation}
	on the other hand, one obtains that
	\begin{equation}
	\label{prop:eq2}
	\begin{array}{ll}
	\phi(y)-\phi(\widetilde{T}_{\beta}(x)) &= (y-\widetilde{T}_{\beta}(x))^TF(\tilde{x}) + \theta(y)-\theta(\widetilde{T}_{\beta}(x)) \\ &\quad +\frac{1}{2\beta}(\Vert x-y\Vert^2-\Vert x-\widetilde{T}_{\beta}(x)\Vert^2).
	\end{array}
	\end{equation} 
	Combining (\ref{prop:eq1}) and (\ref{prop:eq2}), we obtain the targeted inequality
	\[\Vert \widetilde{T}_{\beta}(x) -y\Vert^2\leq \Vert x-y\Vert^2 -\Vert x-\widetilde{T}_{\beta}(x)\Vert^2 + 2\beta(y-\widetilde{T}_{\beta}(x))^T F(\tilde{x}) + 2\beta( \theta(y)- \theta(\widetilde{T}_{\beta}(x))).  \] 
	\qed
\end{proof}

Next, we prove the key inequality for the convergence analysis of GEM, which is based on proposition \ref{prop1}.
\begin{proposition}
	\label{prop2}
Let $\{x^k\}_k$ be the sequence generated by GEM (Algorithm \ref{algo:GEM}), and let $x^*$ be any solution of $\text{MGVI}(\theta,F)$, then the next inequality holds
\begin{equation*}
\Vert x^{k+1}-x^*\Vert^2 \leq \Vert x^k-x^*\Vert^2 - (1-\nu^2)\Vert x^k-\tilde{x}^k\Vert^2.
\end{equation*}	
\end{proposition}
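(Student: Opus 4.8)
The plan is to apply Proposition~\ref{prop1} with the identifications $x=x^k$, $\tilde{x}=\tilde{x}^k$, $\beta=\beta^k$ and $y=x^*$; since $\widetilde{T}_{\beta^k}(x^k)=\text{Prox}_{\beta^k\theta}(x^k-\beta^k F(\tilde{x}^k))=x^{k+1}$ by the corrector step, this gives immediately
\[
\|x^{k+1}-x^*\|^2 \le \|x^k-x^*\|^2 - \|x^k-x^{k+1}\|^2 + 2\beta^k(x^*-x^{k+1})^TF(\tilde{x}^k) + 2\beta^k(\theta(x^*)-\theta(x^{k+1})).
\]
The whole task then reduces to estimating the last two terms so that only $-(1-\nu^2)\|x^k-\tilde{x}^k\|^2$ survives on the right-hand side.

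First I would split $x^*-x^{k+1}=(x^*-\tilde{x}^k)+(\tilde{x}^k-x^{k+1})$ and handle the $x^*-\tilde{x}^k$ piece using the solution property of $x^*$ together with monotonicity of $F$: writing the defining inequality of $\text{MGVI}(\theta,F)$ at the test point $\tilde{x}^k$ yields $(\tilde{x}^k-x^*)^TF(x^*)\ge\theta(x^*)-\theta(\tilde{x}^k)$, and adding the monotonicity inequality $(\tilde{x}^k-x^*)^T(F(\tilde{x}^k)-F(x^*))\ge 0$ gives $2\beta^k(x^*-\tilde{x}^k)^TF(\tilde{x}^k)\le 2\beta^k(\theta(\tilde{x}^k)-\theta(x^*))$. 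Substituting this, the $\theta(x^*)$ contributions cancel, leaving me to control $2\beta^k(\tilde{x}^k-x^{k+1})^TF(\tilde{x}^k)+2\beta^k(\theta(\tilde{x}^k)-\theta(x^{k+1}))$. For this I would invoke the third item of Lemma~\ref{lem:1} applied to $\tilde{x}^k=\text{Prox}_{\beta^k\theta}(x^k-\beta^k F(x^k))$ with $w=x^{k+1}$, which bounds $2\beta^k(\theta(\tilde{x}^k)-\theta(x^{k+1}))\le 2(x^k-\beta^k F(x^k)-\tilde{x}^k)^T(\tilde{x}^k-x^{k+1})$; collecting terms collapses the bound to
\[
\|x^{k+1}-x^*\|^2 \le \|x^k-x^*\|^2 - \|x^k-x^{k+1}\|^2 + 2(\tilde{x}^k-x^{k+1})^T\bigl[(x^k-\tilde{x}^k)-\beta^k(F(x^k)-F(\tilde{x}^k))\bigr].
\]

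The key algebraic step comes next: setting $a:=x^k-x^{k+1}$ and $b:=x^k-\tilde{x}^k$ so that $\tilde{x}^k-x^{k+1}=a-b$, the identity $-\|a\|^2+2(a-b)^Tb=-\|a-b\|^2-\|b\|^2$ rewrites the estimate as
\[
\|x^{k+1}-x^*\|^2 \le \|x^k-x^*\|^2 - \|a-b\|^2 - \|b\|^2 - 2\beta^k(a-b)^T(F(x^k)-F(\tilde{x}^k)).
\]
Finally, Cauchy--Schwarz followed by the predictor-step condition $\beta^k\|F(x^k)-F(\tilde{x}^k)\|\le\nu\|x^k-\tilde{x}^k\|=\nu\|b\|$ bounds the cross term by $2\nu\|a-b\|\,\|b\|\le\|a-b\|^2+\nu^2\|b\|^2$ via Young's inequality, and after cancellation we are left with $\|x^{k+1}-x^*\|^2\le\|x^k-x^*\|^2-(1-\nu^2)\|b\|^2$, which is the claim since $\|b\|=\|x^k-\tilde{x}^k\|$.

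I expect the difficulty here to be bookkeeping rather than conceptual: the argument must route through precisely the right three ingredients — Proposition~\ref{prop1}, the monotonicity-plus-solution combination, and the prox characterization of $\tilde{x}^k$ from Lemma~\ref{lem:1} — and then one must spot the telescoping identity $-\|a\|^2+2(a-b)^Tb=-\|a-b\|^2-\|b\|^2$ that exposes the $-\|x^k-\tilde{x}^k\|^2$ term with a clean coefficient. Keeping track of signs and of the factors of $\beta^k$ in the cross terms is where a careless computation would most easily go astray.
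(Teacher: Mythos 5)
Your proof is correct and follows essentially the same route as the paper's: Proposition~\ref{prop1} with $y=x^*$, the solution property of $x^*$ combined with monotonicity of $F$, the prox characterization of $\tilde{x}^k$ from Lemma~\ref{lem:1}, the identity $\|a\|^2=\|a-b\|^2+\|b\|^2+2b^T(a-b)$, and a final Cauchy--Schwarz/Young step with the predictor condition. If anything, you are slightly more careful than the paper, which derives its inequality (\ref{prop:eq4}) citing only $-F(x^*)\in\partial\theta(x^*)$ while silently also using monotonicity to pass from $F(x^*)$ to $F(\tilde{x}^k)$ --- a step you make explicit.
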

\begin{proof}
	Substituting $x^*=y$, $x^k=x$, $\tilde{x}^k = \tilde{x}$, $\beta^k = \beta$, and $x^{k+1}=\widetilde{T}_{\beta^k}(x^k)$ in the inequality of proposition \ref{prop1}, we obtain 
	\begin{equation}
	\label{prop:eq3}
	\begin{array}{ll}
	\Vert x^{k+1} -x^*\Vert^2 \leq& \Vert x^k-x^*\Vert^2 -\Vert x^k-x^{k+1}\Vert^2 \\
	&+ 2\beta^k(x^*-x^{k+1})^T F(\tilde{x}^k) + 2\beta^k( \theta(x^*)- \theta(x^{k+1})).
	\end{array}
	\end{equation}
	On the other hand, $-F(x^*)\in \partial \theta(x^*)$ implies that
	\begin{equation}
	\label{prop:eq4}
	2\beta^k \theta(\tilde{x}^k)-2\beta^k\theta(x^*)+2\beta^k(\tilde{x}^k-x^*)^TF(\tilde{x}^k) \geq 0.
	\end{equation}
	Adding inequality (\ref{prop:eq4}) to the right side of inequality (\ref{prop:eq3}), then it follows that
	\begin{equation}
	\label{prop:eq5}
	\begin{array}{ll}
		\Vert x^{k+1} -x^*\Vert^2 &\leq \Vert x^k-x^*\Vert^2 -\Vert x^k-x^{k+1}\Vert^2\\
		&\quad + 2\beta^k(\tilde{x}^k-x^{k+1})^T F(\tilde{x}^k) +2\beta^k( \theta(\tilde{x}^k)- \theta(x^{k+1})) \\
		 &=\Vert x^k-x^*\Vert^2-\Vert x^k-\tilde{x}^k\Vert^2 - \Vert \tilde{x}^k-x^{k+1}\Vert^2 \\
		&\quad -2(x^{k+1}-\tilde{x}^k)^T(\tilde{x}^k-x^k+\beta^kF(\tilde{x}^k)) + 2\beta^k(\theta(\tilde{x}^k)-\theta(x^{k+1}),
	\end{array}
	\end{equation}
	where the second equality follows from $$\Vert x^k-x^{k+1}\Vert^2 = \Vert x^k-\tilde{x}^k\Vert^2 + \Vert \tilde{x}^k-x^{k+1}\Vert^2 + 2(x^k-\tilde{x}^k)^T(\tilde{x}^k-x^{k+1}).$$
	Moreover, $\tilde{x}^k = \text{Prox}_{\beta^k \theta}(x^k-\beta^k F(x^k))$ (Lemma \ref{lem:4}) implies that 
	\begin{equation}
	\label{prop:eq6}
	2\beta^k\theta(x^{k+1})-2\beta^k \theta(\tilde{x}^k) + 2(\tilde{x}^k-x^{k+1})^T(x^k-\beta^kF(x^k)-\tilde{x}^k) \geq 0.
	\end{equation} 
	Then adding (\ref{prop:eq6}) to the right side of (\ref{prop:eq5}), it follows that
	\begin{equation}
	\label{prop:eq7}
	\begin{array}{ll}
	\Vert x^{k+1}-x^*\Vert^2 &\leq \Vert x^k-x^*\Vert^2-\Vert x^k-\tilde{x}^k\Vert^2 - \Vert \tilde{x}^k-x^{k+1}\Vert^2 \\
	&\quad + 2\beta^k(\tilde{x}^k-x^{k+1})^T(F(\tilde{x}^k)-F(x^k)).
	\end{array}
	\end{equation}
	Considering also that
	\begin{equation}
	\label{prop:eq8}
	2\beta^k(\tilde{x}^k-x^{k+1})^T(F(\tilde{x}^k)-F(x^k)) \leq \Vert \tilde{x}^k-x^{k+1}\Vert^2 + \Vert\beta^k( F(\tilde{x}^k)-F(x^k))\Vert^2,
	\end{equation}
	then combining (\ref{prop:eq7}) and (\ref{prop:eq8}), and also taking into account that $$\Vert\beta^k( F(\tilde{x}^k)-F(x^k))\Vert^2 \leq \nu^2 \Vert \tilde{x}^k-x^k\Vert^2,$$
	we conclude that  
	\begin{equation*}
	\Vert x^{k+1}-x^*\Vert^2 \leq \Vert x^k-x^*\Vert^2 - (1-\nu^2)\Vert x^k-\tilde{x}^k\Vert^2.
	\end{equation*}	
	\qed
\end{proof}

\begin{theorem}[Convergence of \text{GEM} for $\text{MGVI}(\theta,F)$]
	\label{them:1}
	Let $\{x^k\}_k$ be the sequence generated by GEM, and let $\{\beta^k\}_k$ be bounded such that $\inf \{\beta^k\} >0$. Then $\{x^k\}_k$  converges to a solution of $\text{MGVI}(\theta,F)$. 
\end{theorem}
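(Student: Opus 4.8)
The plan is to show that $\{x^k\}_k$ is Fej\'er monotone with respect to the solution set $\Omega^*$, to extract from the associated telescoping sum that $\Vert x^k-\tilde x^k\Vert\to 0$, to identify any subsequential limit of $\{x^k\}_k$ as an element of $\Omega^*$ via the residual map $e(\cdot,\beta)$, and finally to invoke Lemma \ref{lem:4}.

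\emph{Step 1 (Fej\'er monotonicity and summability).} Since $\nu\in(0,1)$, the constant $1-\nu^2$ is strictly positive, so Proposition \ref{prop2} gives, for every $x^*\in\Omega^*$ (nonempty by Assumption \ref{assum:1}(b)), that $\Vert x^{k+1}-x^*\Vert\le\Vert x^k-x^*\Vert$; hence $\{x^k\}_k$ is Fej\'er monotone with respect to $\Omega^*$, and Lemma \ref{lem:4} shows it is bounded. Summing the estimate of Proposition \ref{prop2} over $k=0,\dots,N$ and telescoping yields $(1-\nu^2)\sum_{k=0}^{N}\Vert x^k-\tilde x^k\Vert^2\le\Vert x^0-x^*\Vert^2$, so $\sum_k\Vert x^k-\tilde x^k\Vert^2<\infty$ and therefore $\Vert x^k-\tilde x^k\Vert\to 0$.

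\emph{Step 2 (subsequential limits solve MGVI).} Recall $x^k-\tilde x^k=e(x^k,\beta^k)$. Put $\underline\beta:=\inf_k\beta^k>0$. Applying inequality (\ref{eq:2_1}) with $\beta_1=\underline\beta\le\beta_2=\beta^k$ gives $\Vert e(x^k,\underline\beta)\Vert\le\Vert e(x^k,\beta^k)\Vert=\Vert x^k-\tilde x^k\Vert\to 0$. Since $\{x^k\}_k$ is bounded, it has a subsequence $x^{k_j}\to\bar x$; because $x\mapsto e(x,\underline\beta)=x-\text{Prox}_{\underline\beta\theta}(x-\underline\beta F(x))$ is continuous ($\text{Prox}_{\underline\beta\theta}$ is nonexpansive and $F$ is continuous), we obtain $e(\bar x,\underline\beta)=\lim_j e(x^{k_j},\underline\beta)=0$, so by the earlier lemma characterizing solutions ($e(x,\beta)=0$ iff $x$ solves $\text{MGVI}(\theta,F)$) we conclude $\bar x\in\Omega^*$.

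\emph{Step 3 (the whole sequence converges).} Apply the second assertion of Lemma \ref{lem:4} with $\mathcal D=\Omega^*$: a Fej\'er monotone sequence admitting a subsequence that converges to a point of $\Omega^*$ converges as a whole to a point of $\Omega^*$. Hence $\{x^k\}_k$ converges to a solution of $\text{MGVI}(\theta,F)$.

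The main obstacle is Step 2: one must pass from the decay of $\Vert e(x^k,\beta^k)\Vert$ with a \emph{varying} step size to a statement about a single fixed step size, which is precisely what the monotonicity inequality (\ref{eq:2_1}) together with $\inf_k\beta^k>0$ provides; after that, continuity of the fixed-$\beta$ residual map closes the argument. The boundedness of $\{\beta^k\}_k$ plays only an incidental role here, serving mainly to keep the iteration well posed.
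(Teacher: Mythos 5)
Your proof is correct, and Step 2 takes a genuinely different route from the paper's. Both arguments share Step 1 (Fej\'er monotonicity from Proposition \ref{prop2}, telescoping, $\Vert x^k-\tilde x^k\Vert\to 0$) and Step 3 (Lemma \ref{lem:4}). Where they diverge is in showing that a subsequential limit lies in $\Omega^*$: the paper extracts a further subsequence along which $\beta^{k_j}\to\beta^*>0$ (this is where boundedness of $\{\beta^k\}_k$ is actually used), writes down the variational inequality characterizing $\tilde x^{k_j}=\text{Prox}_{\beta^{k_j}\theta}(x^{k_j}-\beta^{k_j}F(x^{k_j}))$ from Lemma \ref{lem:1}, and passes to the limit using the lower semicontinuity of $\theta$ via a $\limsup$/$\liminf$ argument. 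You instead convert the vanishing of the variable-step residual $e(x^k,\beta^k)=x^k-\tilde x^k$ into the vanishing of a fixed-step residual $e(\cdot,\underline\beta)$ through the monotonicity inequality (\ref{eq:2_1}), and then close with continuity of $x\mapsto e(x,\underline\beta)$ (which does rely on the standard, though not explicitly stated in the paper, fact that $\text{Prox}_{\underline\beta\theta}$ is nonexpansive) together with the characterization $e(\cdot,\beta)=0\iff$ solution. Your route is arguably cleaner: it avoids the semicontinuity bookkeeping entirely, and, as you note, it never uses an upper bound on $\{\beta^k\}_k$ --- only $\inf_k\beta^k>0$ --- so it establishes the theorem under a formally weaker hypothesis; the price is the appeal to the residual-monotonicity lemma and to continuity of the proximity operator, neither of which the paper's own proof needs.
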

\begin{proof}
	One sees from proposition \ref{prop2} that for all $k$, and any $x^* \in \Omega^*$, it holds that
	\[\Vert x^{k+1}-x^*\Vert^2 \leq \Vert x^k-x^*\Vert^2 - (1-\nu^2)\Vert x^k-\tilde{x}^k\Vert^2.\]
	This implies that $\{x^k\}_k$ is Fej\'{e}r monotone with respect to $\Omega^*$. Adding the above inequality from $0$ to $n$, taking some rearrangements, and letting $n\rightarrow \infty$, then it holds that 
	\begin{equation*}
	\sum_{k=0}^{\infty}(1-\nu^2)\Vert x^k-\tilde{x}^k\Vert^2 \leq \Vert x_0 - x^*\Vert^2.
	\end{equation*} 
	Thus $\sum_{k=0}^{\infty}\Vert x^k-\tilde{x}^k\Vert^2 <\infty$, this implies that $\Vert x^k - \tilde{x}^k\Vert \rightarrow 0$, from which we conclude $\{x^k\}_k$ and $\{\tilde{x}^k\}_k$ have common limit points. Let $y$ be any of their common limit points, then $x^{k_j} \rightarrow y$ and $\tilde{x}^{k_j}\rightarrow y$. Because $\{\beta^k\}_k$ is bounded and $\inf\{\beta_k\}>0$, without loss of generality, one can assume that $\beta^{k_j}\rightarrow \beta^*$ with $\beta^*>0$.  Clearly, $\tilde{x}^{k_j} = \text{Prox}_{\beta^{k_j} \theta}(x^{k_j} -\beta^{k_j}F(x^{k_j}))$, then it follows from Lemma \ref{lem:1} that for any $x\in \R^n$, it holds 
	\begin{equation*}
	\beta^{k_j} \theta(x) - \beta^{k_j}\theta(\tilde{x}^{k_j}) + (x-\tilde{x}^{k_j})^T(\tilde{x}^{k_j}-[x^{k_j}-\beta^{k_j}F(x^{k_j})]) \geq 0,
	\end{equation*}
	thus
	\begin{align}
	\label{them1:eq1}
	&\limsup \{\beta^{k_j} \theta(x) - \beta^{k_j}\theta(\tilde{x}^{k_j}) + (x-\tilde{x}^{k_j})^T(\tilde{x}^{k_j}-[x^{k_j}-\beta^{k_j}F(x^{k_j})])\} \nonumber\\
	=&\beta^*\theta(x)-\liminf \beta^{k_j}\theta(\tilde{x}^{k_j}) + (x-y)^T(\beta^*F(y)) \geq 0.
	\end{align}
	On the other hand, because $\theta$ is a closed proper convex function, this implies that $\theta$ is lower semicontinuous, thus $\theta(y) \leq \liminf \theta(\tilde{x}^{k_j})$. Thus
	\begin{equation}
	\label{them1:eq2}
	-\liminf \beta^{k_j}\theta(\tilde{x}^{k_j}) \leq -\beta^*\theta(y).
	\end{equation}
	Then combining (\ref{them1:eq1}) and (\ref{them1:eq2}), and considering that $\beta^* >0$, we conclude that
	
	\begin{equation*}
	\theta(x) -\theta(y) + (x-y)^TF(y) \geq 0,
	\end{equation*}
	for any $x\in\R^n$.
	Thus $y$ is a solution of $\text{MGVI}(\theta,F)$, i.e., $y\in \Omega^*$. Finally, because $\{x^k\}_k$ is Fej\'{e}r monotone with respect to $\Omega^*$, $y$ is any limit point of $\{x^k\}_k$, and $y\in \Omega^*$, thus Lemma \ref{lem:4} implies that $\{x^k\}_k$ converges to a solution of $\text{MGVI}(\theta,F)$. \qed
\end{proof}

\begin{remark}
To guarantee the convergence of GEM (Algorithm \ref{algo:GEM}), $\{\beta^k\}_k$ requires to be bounded with $\inf\{\beta^k\} >0$. One can select $\beta^k \equiv \beta$ with $\beta \leq \frac{\nu}{L}$. However, in some applications, $L$ can not be explicitly computed, or it is evaluated	conservatively, the latter causes small $\beta$ which usually results in slow convergence. Thus, one can take some adaptive rules for choosing $\beta^k$ at each iteration.
\end{remark}
\section{Proximity and Contraction Algorithms for $\text{MGVI}(\theta,F)$}
\label{sec:4}
In this section, we will introduce the second class of our PGA-based Predictor-Corrector algorithms for $\text{MGVI}(\theta,F)$. We are inspired by the works of He for $\text{MVI}(\theta,F,\C)$ in  \cite{MR1418264,MR1297058}, where the projection operator is used for making a predictor, while our algorithms for $\text{MGVI}(\theta,F)$ use proximity operator for making a predictor. The corrector strategies developed by He for $\text{MVI}(\theta,F,\C)$ can be inherited in our algorithms, this is an inspiring result. Considering that the algorithms developed by He for $\text{MVI}(\theta,F,\C)$ are called as projection and contraction algorithms, our algorithms depend on the strategies for corrector steps by He, and our algorithms also enjoy the contraction properties, thus we name our algorithms as proximity and contraction algorithms. In the sequel, we will just employ part of the corrector strategies developed by He for constructing our algorithms, the reader can refer to \cite{MR1418264,MR1297058} for more strategies for corrector step.

Firstly, we introduce three fundamental inequalities, which are key for constructing our  algorithms for $\text{MGVI}(\theta,F)$.  
\begin{proposition}[Three fundamental inequalities]
	Let $x^*$ be any solution of $\text{MGVI}(\theta,F)$, and for any $x\in \R^n$ and $\beta >0$, let $\tilde{x} = \text{Prox}_{\beta \theta}(x-\beta F(x))$, then the following three inequalities hold:
	\begin{equation*}
	\label{FI1}
	\beta\theta(\tilde{x}) - \beta \theta(x^*) + (\tilde{x}-x^*)^T\beta F(x^*) \geq 0, \tag{FI1}
	\end{equation*}
	\begin{equation*}
	\label{FI2}
	\beta \theta(x^*) -\beta\theta(\tilde{x}) + (\tilde{x}-x^*)^T([x-\beta F(x)] -\tilde{x}) \geq 0,
	\tag{FI2}
	\end{equation*}
	\begin{align*}
	\label{FI3}
	(\tilde{x}-x^*)^T(\beta F(\tilde{x})-\beta F(x^*)) \geq 0. 
	\tag{FI3}
	\end{align*}
\end{proposition}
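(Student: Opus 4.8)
The plan is to establish each of the three inequalities separately, drawing on the characterization of solutions of $\text{MGVI}(\theta,F)$ (via $-F(x^*)\in\partial\theta(x^*)$) and on the variational characterization of the proximity operator in Lemma \ref{lem:1}. For \eqref{FI1}: since $x^*$ solves $\text{MGVI}(\theta,F)$, the defining inequality reads $\theta(x)-\theta(x^*)+(x-x^*)^TF(x^*)\geq 0$ for all $x\in\R^n$; I would simply substitute $x=\tilde{x}$ and multiply through by $\beta>0$. This is immediate.

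For \eqref{FI2}: by definition $\tilde{x}=\text{Prox}_{\beta\theta}(x-\beta F(x))$, so Lemma \ref{lem:1}(3) applied with $f=\beta\theta$, the point $x-\beta F(x)$ in place of ``$x$'', $\tilde{x}$ in place of ``$p$'', and $w=x^*$ gives
\[
\bigl([x-\beta F(x)]-\tilde{x}\bigr)^T(\tilde{x}-x^*)\geq \beta\theta(\tilde{x})-\beta\theta(x^*).
\]
Rearranging this is exactly \eqref{FI2} after moving the $\theta$-terms to the left-hand side (note the inner product is symmetric, so $([x-\beta F(x)]-\tilde{x})^T(\tilde{x}-x^*)=(\tilde{x}-x^*)^T([x-\beta F(x)]-\tilde{x})$). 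For \eqref{FI3}: this is just monotonicity of $F$ applied to the pair $\tilde{x}$ and $x^*$, namely $(F(\tilde{x})-F(x^*))^T(\tilde{x}-x^*)\geq 0$, multiplied by $\beta>0$.

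There is essentially no obstacle here: all three are one-line consequences of, respectively, the definition of a solution, Lemma \ref{lem:1}, and the monotonicity assumption on $F$. The only point requiring a little care is making sure the sign conventions and the roles of the arguments in Lemma \ref{lem:1} match up so that \eqref{FI2} comes out with the $\theta$-terms in the stated order (opposite to that in \eqref{FI1}, which is precisely the feature the authors exploit when summing the inequalities). I would write the three verifications in sequence, each as a short displayed computation, and conclude.
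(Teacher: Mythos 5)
Your proposal is correct and matches the paper's own proof: \eqref{FI1} from the definition of a solution with $x=\tilde{x}$, \eqref{FI2} from the variational characterization of the proximity operator in Lemma \ref{lem:1} evaluated at $w=x^*$, and \eqref{FI3} from monotonicity of $F$, each scaled by $\beta>0$. The application of Lemma \ref{lem:1} with $f=\beta\theta$ and input point $x-\beta F(x)$ is exactly the step the paper performs, so there is nothing to add.
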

\begin{proof}
	The first inequality (\ref{FI1}) follows from the definition of $\text{MGVI}(\theta,F)$. The final inequality (\ref{FI3}) holds because $F$ is monotone over $\R^n$. Now let us prove the second inequality (\ref{FI2}). Because $\tilde{x} = \text{Prox}_{\beta \theta}(x-\beta F(x))$, then it follows from Lemma \ref{lem:1} that for all $y\in \R^n$, it holds that
	\begin{equation*}
	\beta \theta(y) \geq \beta \theta(\tilde{x}) + (y-\tilde{x})^T(x-\beta F(x)-\tilde{x}),
	\end{equation*}
	replacing $y$ by $x^*$, and by rearrangement, then we conclude (\ref{FI2}).
	\qed
\end{proof}

We observe that the leftmost terms of (\ref{FI1}) and (\ref{FI2}) are opposite to each other, so they can be eliminated when adding (\ref{FI1}) and (\ref{FI2}), or when adding (\ref{FI1}), (\ref{FI2}) and (\ref{FI3}). This observance makes it possible for us to directly employ the strategies of corrector step developed by He in \cite{MR1418264,MR1297058} for $\text{MVI}(\theta,F,\C)$.

\subsection{Proximity and contraction algorithms based on (\ref{FI1}) and (\ref{FI2})}

In this part, we consider $\text{MGVI}(\theta,F)$ with $F(x) = Mx + q$. Then (\ref{FI1}) and (\ref{FI2}) take the next forms
\begin{equation*}
\beta\theta(\tilde{x}) - \beta \theta(x^*) + (\tilde{x}-x^*)^T\beta (Mx^*+q) \geq 0,
\end{equation*}
and 
\begin{equation*}
\beta \theta(x^*) -\beta\theta(\tilde{x}) + (\tilde{x}-x^*)^T([x-\beta(Mx+q)] -\tilde{x}) \geq 0.
\end{equation*}
Adding the above two inequalities together, one obtains that
\begin{align}
\label{ine:1}
(x-x^*)^T(I+\beta M^T)(x-\tilde{x}) &\geq (x-x^*)^T\beta M(x-x^*) + \Vert x-\tilde{x}\Vert^2 \nonumber\\
& \geq \Vert x-\tilde{x}\Vert^2,
\end{align}
where the last inequality holds because $M$ is positive semi-definite\footnote{Here, $M$ is not required to be symmetric.}, i.e., for all $x,y\in\R^n$, $(x-y)^TM(x-y) \geq 0$. Replacing the above $x$, $\tilde{x}$, $\beta$ by $x^k$, $\tilde{x}^k$, $\beta^k$ respectively, and setting
\begin{align*}
\phi(x^k,\tilde{x}^k,\beta^k) &:= \Vert x^k-\tilde{x}^k\Vert^2, \\
d(x^k,\tilde{x}^k,\beta^k) &:= (I+\beta^k M^T)(x^k-\tilde{x}^k),
\end{align*}  
then it follows that $(x^k-x^*)^Td(x^k,\tilde{x}^k,\beta^k) \geq \phi(x^k,\tilde{x}^k,\beta^k)$. Next, we use $$x^{k+1} = x^k - \alpha d(x^k,\tilde{x}^k,\beta^k)$$ to obtain the next surrogate, then one sees
\begin{align*}
\vartheta_k(\alpha) &:= \Vert x^k -x^*\Vert^2 - \Vert x^{k+1} - x^*\Vert^2 \\
&= 2\alpha (x^k-x^*)^Td(x^k,\tilde{x}^k,\beta^k) - \alpha^2\Vert d(x^k,\tilde{x}^k,\beta^k)\Vert^2 \\
&\geq 2\alpha \phi(x^k,\tilde{x}^k,\beta^k) - \alpha^2 \Vert d(x^k,\tilde{x}^k,\beta^k)\Vert^2 := q_k(\alpha).
\end{align*}
Note that $q_k(\alpha)$ obtains its maximum at $\alpha_k^* = \frac{\phi(x^k,\tilde{x}^k,\beta^k)}{\Vert d(x^k,\tilde{x}^k,\beta^k)\Vert^2}$, thus by setting $x^{k+1} = x^k - \gamma \alpha^*d(x^k,\tilde{x}^k,\beta^k)$, one sees that 
\begin{align}
\label{PGAa1}
\Vert x^{k+1} -x^*\Vert^2 &\leq \Vert x^k-x^*\Vert^2 - \gamma(2-\gamma)\alpha_k^*\phi(x^k,\tilde{x}^k,\beta^k) \nonumber\\
&= \Vert x^k-x^*\Vert^2 - \gamma(2-\gamma)\alpha_k^*\Vert x^k -\tilde{x}^k\Vert^2.
\end{align}
Note that $\alpha_k^* = \frac{\phi(x^k,\tilde{x}^k,\beta^k)}{d(x^k,\tilde{x}^k,\beta^k)} \geq \frac{1}{\Vert I+\beta^k M^T\Vert_2}$. Thus, $\inf\{\beta^k\}_k > 0$ implies that  $\inf \{\alpha_k^*\}_k > 0$. 

The aforementioned algorithm, denoted as $\text{PGA}_{a_1}$, is summarized in Algorithm \ref{algo:PGA1}.

\begin{algorithm}[ht]
	\caption{$\text{PGA}_{a_1}$ for $\text{MGVI}(\theta,F)$}
	\label{algo:PGA1}
	\KwIn{$x^0 \in \R^n$, and $\gamma \in (0,2)$.}
	
	\For{$k=0, 1, \cdots$}
	{   
		\textbf{Predictor:}  Selecting $\beta_k >0$, and $\tilde{x}^k = \text{Prox}_{\beta^k \theta}(x^k-\beta^k F(x^k))$ ;
		
		\textbf{Corrector:} Setting $d(x^k,\tilde{x}^k,\beta^k) = (I+\beta^kM^T)(x^k-\tilde{x}^k)$, and computing $\alpha_k^* = \frac{\Vert x^k-\tilde{x}^k\Vert^2}{\Vert d(x^k,\tilde{x}^k,\beta^k)\Vert^2}$.
		
		Setting $x^{k+1} = x^k - \gamma\alpha_k^*d(x^k,\tilde{x}^k,\beta^k)$.
	}
	
\end{algorithm}

Next, we construct another proximity and contraction algorithm, which is based on that $M$ is symmetric positive semi-definite. In this case, setting $\phi(x^k,\tilde{x}^k) := \Vert x^k-\tilde{x}^k\Vert^2$, $d(x^k,\tilde{x}^k) :=  x^k-\tilde{x}^k$, and let $G:= I+\beta M^T$, $x^{k+1} = x^k -\alpha d(x^k,\tilde{x}^k)$, then one sees $(x^k-x^*)^TGd(x^k,\tilde{x}^k) \geq \phi(x^k,\tilde{x}^k)$. Let $\vartheta_k(\alpha) := \Vert x^k-x^*\Vert_G^2 - \Vert x^{k+1} -x^*\Vert_G^2$, one sees that 
\begin{align*}
\vartheta_k(\alpha) &= 2(x^k-x^*)Gd(x^k,\tilde{x}^k) - \alpha^2\Vert d(x^k,\tilde{x}^k)\Vert_G^2\\
&\geq 2\phi(x^k,\tilde{x}^k) -\alpha^2\Vert d(x^k,\tilde{x}^k)\Vert_G^2 := q_k(\alpha).
\end{align*}
Note that $q_k(\alpha)$ obtains its maximum at $\alpha_k^* = \frac{\phi(x^k,\tilde{x}^k)}{\Vert d(x^k,\tilde{x}^k)\Vert_G^2}$, thus by setting $x^{k+1} = x^k - \gamma \alpha_k^*d(x^k,\tilde{x}^k)$, one sees that 
\begin{align*}
\Vert x^{k+1} -x^*\Vert_G^2 &\leq \Vert x^k-x^*\Vert_G^2 - \gamma(2-\gamma)\alpha_k^*\phi(x^k,\tilde{x}^k,\beta^k) \\
&= \Vert x^k-x^*\Vert_G^2 - \gamma(2-\gamma)\alpha_k^*\Vert x^k -\tilde{x}^k\Vert^2.
\end{align*} 
Note that $\alpha_k^* = \frac{\phi(x^k,\tilde{x}^k)}{\Vert d(x^k,\tilde{x}^k)\Vert_G^2} \geq \frac{1}{\lambda_{\text{max}}(G)}$.

The above method for symmetric $M$, denoted as $\text{PGA}_{a_2}$, is summarized below in Algorithm \ref{algo:PGA2}.
\begin{algorithm}[ht]
	\caption{$\text{PGA}_{a_2}$ for $\text{MGVI}(\theta,F)$}
	\label{algo:PGA2}
	\KwIn{$x^0 \in \R^n$, $\beta >0$, and $\gamma \in (0,2)$.}
	
	\For{$k=0, 1, \cdots$}
	{   
		\textbf{Predictor:}  $\tilde{x}^k = \text{Prox}_{\beta \theta}(x^k-\beta F(x^k))$ ;
		
		\textbf{Corrector:} Setting $d(x^k,\tilde{x}^k) = x^k-\tilde{x}^k$, and computing $\alpha_k^* = \frac{\Vert x^k-\tilde{x}^k\Vert^2}{\Vert d(x^k,\tilde{x}^k)\Vert_{G}^2}$,
		
		Setting $x^{k+1} = x^k - \gamma\alpha_k^*d(x^k,\tilde{x}^k)$.
	}
	
\end{algorithm}
\begin{theorem}[Convergence of $\text{PGA}_{a_1}$ and $\text{PGA}_{a_2}$ for $\text{MGVI}(\theta,F)$]
	\label{them:2}
	Let $\{x^k\}_k$ and $\{y^k\}_k$ be the sequences generated by $\text{PGA}_{a_1}$ and $\text{PGA}_{a_2}$, respectively, and for $\text{PGA}_{a_1}$, let $\{\beta^k\}_k$ be bounded with $\inf \{\beta_k\} >0$. Then $\{x^k\}_k$ (resp. $\{y^k\}_k)$ converges to a solution  of the corresponding $\text{MGVI}(\theta,F)$.
\end{theorem}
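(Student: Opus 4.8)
The plan is to follow the blueprint of the proof of Theorem~\ref{them:1} almost verbatim, the only genuinely new bookkeeping being the weighted norm $\Vert\cdot\Vert_G$ appearing in $\text{PGA}_{a_2}$ and the verification that the step lengths stay bounded away from zero. For $\text{PGA}_{a_1}$ the descent estimate (\ref{PGAa1}) is already in hand: for every $k$ and every $x^*\in\Omega^*$,
\[
\Vert x^{k+1}-x^*\Vert^2 \leq \Vert x^k-x^*\Vert^2 - \gamma(2-\gamma)\alpha_k^*\Vert x^k-\tilde{x}^k\Vert^2 ,
\]
and since $\gamma\in(0,2)$ while, by the bound $\alpha_k^*\geq 1/\Vert I+\beta^k M^T\Vert_2$ together with $\inf_k\beta^k>0$ and the boundedness of $\{\beta^k\}_k$, one has $\inf_k\gamma(2-\gamma)\alpha_k^*>0$. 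Hence $\{x^k\}_k$ is Fej\'er monotone with respect to $\Omega^*$. For $\text{PGA}_{a_2}$ the analogous estimate derived above reads
\[
\Vert x^{k+1}-x^*\Vert_G^2 \leq \Vert x^k-x^*\Vert_G^2 - \gamma(2-\gamma)\alpha_k^*\Vert x^k-\tilde{x}^k\Vert^2 ,
\]
with $\alpha_k^*\geq 1/\lambda_{\text{max}}(G)$ a fixed positive constant because $\beta$ is fixed; since $G=I+\beta M^T$ is symmetric positive definite, $\Vert\cdot\Vert_G$ is a norm equivalent to $\Vert\cdot\Vert$, so $\{x^k\}_k$ is Fej\'er monotone with respect to $\Omega^*$ in the $G$-norm, which still permits invoking Lemma~\ref{lem:4}.

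Next I would sum the relevant descent inequality from $k=0$ to $n$, rearrange, and let $n\to\infty$; pulling out the uniform lower bound on $\gamma(2-\gamma)\alpha_k^*$ yields $\sum_{k=0}^{\infty}\Vert x^k-\tilde{x}^k\Vert^2<\infty$, hence $\Vert x^k-\tilde{x}^k\Vert\to0$. Therefore $\{x^k\}_k$ and $\{\tilde{x}^k\}_k$ share their limit points, and both are bounded by Lemma~\ref{lem:4}, so such limit points exist. Fix a common limit point $y$ with $x^{k_j}\to y$ and $\tilde{x}^{k_j}\to y$ along a subsequence (for $\text{PGA}_{a_1}$, after a further extraction, $\beta^{k_j}\to\beta^*>0$ using $\inf_k\beta^k>0$ and boundedness; for $\text{PGA}_{a_2}$, $\beta^{k_j}\equiv\beta$). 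Since $\tilde{x}^{k_j}=\text{Prox}_{\beta^{k_j}\theta}(x^{k_j}-\beta^{k_j}F(x^{k_j}))$, Lemma~\ref{lem:1}(3) gives, for every $x\in\R^n$,
\[
\beta^{k_j}\theta(x)-\beta^{k_j}\theta(\tilde{x}^{k_j}) + (x-\tilde{x}^{k_j})^T\bigl(\tilde{x}^{k_j}-[x^{k_j}-\beta^{k_j}F(x^{k_j})]\bigr)\geq0 .
\]
Taking $\limsup$ and using continuity of the affine map $F=M(\cdot)+q$ (so $F(x^{k_j})\to F(y)$), the lower semicontinuity of $\theta$ (so $\liminf\theta(\tilde{x}^{k_j})\geq\theta(y)$), and $\beta^*>0$, I obtain $\theta(x)-\theta(y)+(x-y)^TF(y)\geq0$ for all $x\in\R^n$, i.e. $y\in\Omega^*$. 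Finally, Fej\'er monotonicity with respect to $\Omega^*$ (in $\Vert\cdot\Vert$ for $\text{PGA}_{a_1}$, in $\Vert\cdot\Vert_G$ for $\text{PGA}_{a_2}$) together with the second bullet of Lemma~\ref{lem:4} forces the whole sequence to converge to a point of $\Omega^*$.

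The argument is routine once the descent inequalities are in place; the two points requiring a little care are confirming $\inf_k\gamma(2-\gamma)\alpha_k^*>0$ so that the summation step genuinely forces $\Vert x^k-\tilde{x}^k\Vert\to0$ (this is exactly where the hypothesis ``$\{\beta^k\}_k$ bounded with $\inf_k\beta^k>0$'' enters for $\text{PGA}_{a_1}$), and carrying the $G$-norm through the Fej\'er-monotonicity machinery for $\text{PGA}_{a_2}$. I expect the latter to be the main, though mild, obstacle: Lemma~\ref{lem:4} is phrased for the Euclidean norm, so one must note that boundedness and the subsequence-to-full-sequence implication are preserved under the equivalence of $\Vert\cdot\Vert_G$ and $\Vert\cdot\Vert$ that holds since $G$ is symmetric positive definite.
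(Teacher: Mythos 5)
Your proposal is correct and follows essentially the same route as the paper's proof: invoke the descent inequality (\ref{PGAa1}), deduce Fej\'er monotonicity, sum to get $\Vert x^k-\tilde{x}^k\Vert\to0$ using $\inf_k\alpha_k^*>0$, and then reuse the subsequence argument of Theorem~\ref{them:1} together with Lemma~\ref{lem:4}. The only difference is that you spell out the $G$-norm bookkeeping for $\text{PGA}_{a_2}$ explicitly, whereas the paper dismisses that case with ``follows the similar argument''; your added care there is sound but not a different method.
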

\begin{proof}
	We will just prove the convergence of $\text{PGA}_{a_1}$ since the proof of the convergence of $\text{PGA}_{a_2}$ follows the similar argument.
	
	Let $x^*\in \Omega^*$ be any solution of $\text{MGVI}(\theta,F)$. The inequality (\ref{PGAa1}) implies that $\{x^k\}_k$ is Fej\'{e}r monotone with respect to $\Omega^*$, moreover, by adding from $k = 0$ to $n$, taking some rearrangements, and letting $n\rightarrow \infty$, then it holds that 
	\begin{equation*}
	\sum_{k=0}^{\infty}\gamma(2-\gamma)\alpha_k^*\Vert x^k-\tilde{x}^k\Vert^2 \leq \Vert x_0 - x^*\Vert^2.
	\end{equation*} 
	Because $\inf\{\alpha_k^*\}>0$, thus the above inequality implies $\Vert x^k - \tilde{x}^k\Vert \rightarrow 0$, for which we conclude that  $\{x^k\}_k$ and $\{\tilde{x}^k\}_k$ have common limit points. Then the similar argument in Theorem \ref{them:1} implies that  $\{x^k\}_k$ converges to a solution of $\text{MGVI}(\theta,F)$. \qed
\end{proof}

\begin{remark}
	To guarantee that $\text{PGA}_{a_1}$ (Algorithm \ref{algo:PGA1}) converges, $\{\beta^k\}_k$ requires to be bounded such that $\inf \{\beta^k\}>0$. The adaptive rule $$\Vert \beta^kM^T(x^k-\tilde{x}^k)\Vert = \mathcal{O}(\Vert x^k-\tilde{x}^k\Vert)$$ was suggested by He for $\text{MVI}(\theta,F,\C)$, and can also be applied to our algorithms. For $\text{PGA}_{a_2}$, $\beta^k \equiv \beta$, the convergence is guaranteed with $\beta$ being any positive constant, one possibility may be  $\beta\Vert M\Vert_2 \approx 1$.  
\end{remark}
\subsection{Proximity and contraction algorithms based on (\ref{FI2}), (\ref{FI2}) and (\ref{FI3})}
In this part, we consider the case where $F$ is monotone. By adding (\ref{FI1}), (\ref{FI2}) and (\ref{FI3}) together, one obtains that 
\begin{equation*}
(\tilde{x}-x^*)^T\{(x-\tilde{x}) -\beta(F(x)-F(\tilde{x})) \} \geq 0,
\end{equation*}
thus it holds that 
\begin{equation}
\label{fi2_based1}
(x-x^*)^T\{(x-\tilde{x})-\beta(F(x)-F(\tilde{x}))\} \geq (x-\tilde{x})^T\{(x-\tilde{x})-\beta(F(x)-F(\tilde{x}))\}.
\end{equation}
Replacing the above $x$, $\tilde{x}$, $\beta$ by $x^k$, $\tilde{x}^k$, $\beta^k$ respectively, and setting
\begin{align*}
\phi(x^k,\tilde{x}^k,\beta^k) &:= (x^k-\tilde{x}^k)^T\{(x^k-\tilde{x}^k)-\beta^k(F(x^k)-F(\tilde{x}^k))\}, \\
d(x^k,\tilde{x}^k,\beta^k) &:= (x^k-\tilde{x}^k)-\beta^k(F(x^k)-F(\tilde{x}^k)),
\end{align*} 
then one sees that $(x^k-x^*)^Td(x^k,\tilde{x}^k,\beta^k) \geq \phi(x^k,\tilde{x}^k,\beta^k)$. For a fixed $\nu\in(0,1)$, considering that $F$ is lipschitz continuous with parameter $L>0$, thus some $\beta^k$ can be selected such that 
\begin{equation*}
\beta^k \Vert F(x^k) - F(\tilde{x}^k) \Vert \leq \nu \Vert x^k-\tilde{x}^k\Vert,  
\end{equation*}
thus $\phi(x^k,\tilde{x}^k,\beta^k) \geq (1-\nu)\Vert x^k-\tilde{x}^k\Vert^2$. By setting $x^{k+1} = x^k - \alpha d(x^k,\tilde{x}^k,\beta^k)$, one sees that 
\begin{align*}
\vartheta_k(\alpha) &:= \Vert x^k - x^*\Vert^2 - \Vert x^{k+1} - x^*\Vert^2 \\
&= 2\alpha(x^k-x^*)^Td(x^k,\tilde{x}^k,\beta^k) - \alpha^2\Vert d(x^k,\tilde{x}^k,\beta^k)\Vert^2\\
&\geq 2\alpha \phi(x^k,\tilde{x}^k,\beta^k) - \alpha^2\Vert d(x^k,\tilde{x}^k,\beta^k)\Vert^2 := q_k(\alpha).
\end{align*}
Note that $q_k(\alpha)$ obtains its maximum at $\alpha_k^* = \frac{\phi(x^k,\tilde{x}^k,\beta^k)}{\Vert d(x^k,\tilde{x}^k,\beta^k)\Vert^2}$, and one also sees that 
\begin{align*}
2\phi(x^k,\tilde{x}^k,\beta^k) - \Vert d(x^k,\tilde{x}^k,\beta^k)\Vert^2 &= 2(x^k-\tilde{x}^k)^Td(x^k,\tilde{x}^k,\beta^k)-\Vert d(x^k,\tilde{x}^k,\beta^k)\Vert^2\\
&= d(x^k,\tilde{x}^k,\beta^k)^T\{2(x^k-\tilde{x}^k)-d(x^k,\tilde{x}^k,\beta^k)\} \\
&= (x^k-\tilde{x}^k)^T - {\beta^k}^2\Vert F(x^k)-F(\tilde{x}^k)\Vert^2 \\
&\geq (1-\nu^2)\Vert x^k-\tilde{x}^k\Vert^2,
\end{align*}
thus $\alpha_k^* > \frac{1}{2}$ for all $k$. Setting $x^{k+1} = x^k - \gamma \alpha_k^* d(x^k,\tilde{x}^k,\beta^k)$, then one obtains that
\begin{align*}
\Vert x^{k+1} -x^*\Vert^2  &\leq \Vert x^k -x^* \Vert^2- (2-\gamma)\gamma \alpha_k^*\phi(x^k,\tilde{x}^k,\beta^k) \\ 
& \leq \Vert x^k -x^*\Vert^2 - (2-\gamma)\gamma\alpha_k^*(1-\nu)\Vert x^k-\tilde{x}^k\Vert^2.
\end{align*}

The aforementioned algorithm, denoted as $\text{PGA}_{b_1}$, is summarized in Algorithm \ref{algo:PGAb1}.

\begin{algorithm}[ht]
	\caption{$\text{PGA}_{b_1}$ for $\text{MGVI}(\theta,F)$}
	\label{algo:PGAb1}
	\KwIn{$x^0 \in \R^n$, and $\gamma \in (0,2)$.}
	
	\For{$k=0, 1, \cdots$}
	{   
		\textbf{Predictor:}  Selecting $\beta^k >0$ such that $\beta^k\Vert F(x^k)-F(\tilde{x}^k)\Vert \leq \nu \Vert x^k-\tilde{x}^k\Vert$ where $\tilde{x}^k = \text{Prox}_{\beta^k \theta}(x^k-\beta^k F(x^k))$ ;
		
		\textbf{Corrector:} Setting $d(x^k,\tilde{x}^k,\beta^k) = (x^k-\tilde{x}^k)-\beta^k(F(x^k)-F(\tilde{x}^k))$, and computing $\alpha_k^* = \frac{(x^k-\tilde{x}^k)^Td(x^k,\tilde{x}^k,\beta^k)}{\Vert d(x^k,\tilde{x}^k,\beta^k)\Vert^2}$,
		
		Setting $x^{k+1} = x^k - \gamma\alpha_k^*d(x^k,\tilde{x}^k,\beta^k)$.
	}
	
\end{algorithm}

Finally, we distinguish a special case where  $F(x) = Ax + c$ with $A$ being a symmetric positive semi-definite matrix, and $c\in \R^n$ being a constant vector. In this case, inequality (\ref{fi2_based1}) takes the next form
\begin{equation*}
(x-x^*)^T(I-\beta A)(x-\tilde{x}) \geq (x-\tilde{x})^T(I-\beta A)(x-\tilde{x}).
\end{equation*} 
Setting $G:=I-\beta A$ with $\beta$ being a fixed positive constant smaller than $\frac{1}{\lambda_{\max}(A)}$, thus $G$ is a symmetric positive definite matrix. Replacing the above $x$ and $\tilde{x}$ by $x^k$ and $\tilde{x}^k$, respectively. Then, one sees that
\begin{equation*}
(x^k-x^*)^TG(x^k-\tilde{x}^k) \geq (x^k-\tilde{x}^k)^TG(x^k-\tilde{x}^k).
\end{equation*}
Let $x^{k+1} = x^k - \gamma(x^k -\tilde{x}^k)$, thus it follows that 
\begin{equation*}
\Vert x^{k+1} -x^*\Vert_{G}^2 \leq \Vert x^{k} -x^*\Vert_{G}^2 - (2-\gamma)\gamma \Vert x^{k} -\tilde{x}^k\Vert_{G}^2.
\end{equation*}

The above method for symmetric $A$ is denoted as $\text{PGA}_{b2}$, the corrector strategy was not mentioned by He, but it can be simply constructed from (\ref{fi2_based1}), we  summarize it below in Algorithm \ref{algo:PGAb2}.
\begin{algorithm}[ht]
	\caption{$\text{PGA}_{b_2}$ for $\text{MGVI}(\theta,F)$}
	\label{algo:PGAb2}
	\KwIn{$x^0 \in \R^n$, $\beta >0$, and $\gamma \in (0,2)$.}
	
	\For{$k=0, 1, \cdots$}
	{   
		\textbf{Predictor:}  $\tilde{x}^k = \text{Prox}_{\beta \theta}(x^k-\beta F(x^k))$ ;
		
		\textbf{Corrector:} Setting $d(x^k,\tilde{x}^k) = x^k-\tilde{x}^k$,
		
		Setting $x^{k+1} = x^k - \gamma d(x^k,\tilde{x}^k)$.
	}
	
\end{algorithm}

The convergence analysis of $\text{PGA}_{b_1}$ and $\text{PGA}_{b_2}$ can be proved with essentially the argument that was used in Theorem \ref{them:2}. 
\begin{theorem}[Convergence of $\text{PGA}_{b_1}$ and $\text{PGA}_{b_2}$ for $\text{MGVI}(\theta,F)$]
	Let $\{x^k\}_k$ and $\{y^k\}_k$ be the sequences generated by $PGA_{b_1}$ and $PGA_{b_2}$, respectively, and for $PGA_{b_1}$, let $\{\beta^k\}_k$ be bounded with $\inf \{\beta_k\} >0$. Then $\{x^k\}_k$ (resp. $\{y^k\}_k)$ converges to a solution  of the corresponding $\text{MGVI}(\theta,F)$. 
\end{theorem}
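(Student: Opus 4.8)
The plan is to reproduce, with minor adaptations, the argument already used for Theorem \ref{them:2}. For $\text{PGA}_{b_1}$, the excerpt has established the contraction estimate $\Vert x^{k+1}-x^*\Vert^2 \le \Vert x^k-x^*\Vert^2 - (2-\gamma)\gamma\alpha_k^*(1-\nu)\Vert x^k-\tilde{x}^k\Vert^2$ for every $x^*\in\Omega^*$, together with $\alpha_k^*>\tfrac{1}{2}$ for all $k$. Since $\gamma\in(0,2)$ and $\nu\in(0,1)$, the coefficient $(2-\gamma)\gamma\alpha_k^*(1-\nu)$ is bounded below by the positive constant $c:=(2-\gamma)\gamma(1-\nu)/2$. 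Hence $\{x^k\}_k$ is Fej\'er monotone with respect to $\Omega^*$; summing the inequality from $k=0$ to $n$, rearranging and letting $n\to\infty$ gives $c\sum_{k=0}^\infty \Vert x^k-\tilde{x}^k\Vert^2 \le \Vert x^0-x^*\Vert^2 < \infty$, so $\Vert x^k-\tilde{x}^k\Vert\to 0$. Consequently $\{x^k\}_k$ and $\{\tilde{x}^k\}_k$ possess the same limit points.

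Next I would identify any such limit point as a solution, exactly as in the proof of Theorem \ref{them:1}. Let $x^{k_j}\to y$, hence also $\tilde{x}^{k_j}\to y$; since $\{\beta^k\}_k$ is bounded with $\inf\{\beta^k\}>0$, pass to a further subsequence so that $\beta^{k_j}\to\beta^*>0$. Applying the characterization in Lemma \ref{lem:1} to $\tilde{x}^{k_j}=\text{Prox}_{\beta^{k_j}\theta}(x^{k_j}-\beta^{k_j}F(x^{k_j}))$ yields, for every $x\in\R^n$, the inequality $\beta^{k_j}\theta(x)-\beta^{k_j}\theta(\tilde{x}^{k_j})+(x-\tilde{x}^{k_j})^T(\tilde{x}^{k_j}-[x^{k_j}-\beta^{k_j}F(x^{k_j})])\ge 0$. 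Taking $\limsup_{j\to\infty}$, using continuity of $F$, boundedness of the sequences involved, the lower semicontinuity of the closed proper convex function $\theta$ (so that $\liminf_j\theta(\tilde{x}^{k_j})\ge\theta(y)$), and dividing by $\beta^*>0$, we obtain $\theta(x)-\theta(y)+(x-y)^TF(y)\ge 0$ for all $x\in\R^n$, i.e.\ $y\in\Omega^*$. Since $\{x^k\}_k$ is Fej\'er monotone with respect to $\Omega^*$ and has a subsequence converging to $y\in\Omega^*$, Lemma \ref{lem:4} shows $\{x^k\}_k$ converges to a solution of $\text{MGVI}(\theta,F)$.

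For $\text{PGA}_{b_2}$ the only change is that the contraction holds in the norm $\Vert\cdot\Vert_G$ induced by $G=I-\beta A$: the excerpt gives $\Vert x^{k+1}-x^*\Vert_G^2 \le \Vert x^k-x^*\Vert_G^2 - (2-\gamma)\gamma\Vert x^k-\tilde{x}^k\Vert_G^2$. Since $\beta<1/\lambda_{\max}(A)$, the matrix $G$ is symmetric positive definite, so $\Vert\cdot\Vert_G$ is a norm equivalent to the Euclidean norm; in particular $\{y^k\}_k$ is Fej\'er monotone with respect to $\Omega^*$ in this equivalent norm, $\sum_k\Vert y^k-\tilde{y}^k\Vert_G^2<\infty$, and hence $\Vert y^k-\tilde{y}^k\Vert\to 0$ as well. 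From this point the limit-point identification is verbatim the same (with $\beta^k\equiv\beta$, so no subsequence of step sizes is needed), and Lemma \ref{lem:4}, applied now with respect to $\Vert\cdot\Vert_G$, again yields convergence of $\{y^k\}_k$ to a solution.

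The routine parts are the summation and the passage to the limit; the one point that genuinely requires care is the handling of the variable term $\theta(\tilde{x}^{k_j})$, which cannot be passed to the limit directly but only estimated from below via lower semicontinuity — this is exactly what lets the inequality survive in the limit with the correct sign, as in Theorem \ref{them:1}. For $\text{PGA}_{b_2}$ one must additionally keep track of the fact that Fej\'er monotonicity and Lemma \ref{lem:4} are being invoked in the $G$-norm rather than the Euclidean one, which is legitimate precisely because $G$ is positive definite and the two norms are therefore equivalent.
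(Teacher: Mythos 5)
Your proposal is correct and follows essentially the same route the paper intends: the paper itself gives no separate proof here, stating only that the argument of Theorem \ref{them:2} (contraction inequality, Fej\'er monotonicity, summability of $\Vert x^k-\tilde{x}^k\Vert^2$ via the uniform lower bound $\alpha_k^*>\tfrac12$, then the limit-point identification from Theorem \ref{them:1} and Lemma \ref{lem:4}) carries over, which is exactly what you have written out, including the correct handling of the $G$-norm for $\text{PGA}_{b_2}$.
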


\begin{remark}
	In the description of $\text{PGA}_{a_1}$, $\text{PGA}_{a_2}$, and $\text{PGA}_{b_1}$, we neglect the fact that $x^k$ may be equal to $\tilde{x}^k$, however, this will hardly happen in practice, moreover, when implementing these algorithms, one may priorly determine whether $x^k = \tilde{x}^k$.  
\end{remark}

\section{A Discussion about Our Algorithms}
\label{sec:5}
In the preceding sections, we introduce our algorithms for $\text{MGVI}(\theta,F)$, while in this part, we will show the characteristics of our algorithms with respect to the following separable two-blocks convex optimization model. 
\begin{equation}
\label{ADMM}
\begin{split}
\min\quad &\theta_1(x) + \theta_2(y) \\
\text{s.t.}\quad  & Ax + By =c, \\
\end{split}
\end{equation}
where $A\in\R^{m\times n}$, $B\in \R^{m\times q}$, $c\in \R^m$, $\theta_1:\R^n \rightarrow (-\infty,\infty]$ and $\theta_2:\R^q \rightarrow (-\infty,\infty]$ are two closed proper convex functions. 

The well known alternating direction method of
multipliers (ADMM) \cite{MR724072} is very popular for problem (\ref{ADMM}), the reader can refer to \cite{BoydADMM} and references therein for efficient applications of ADMM in a variety of fields. The procedure of ADMM is shown below in Algorithm \ref{algo:ADMM}. 
\begin{algorithm}[ht]
	\caption{ADMM for problem (\ref{ADMM})}
	\label{algo:ADMM}
	\textbf{Initialization:} $x^0 \in \R^n$, $y^0\in\R^q$, $\lambda^0\in\R^m$, and $\rho>0$.
	
	\textbf{General step:} for $k=0,1,\cdots$ execute the following:
	\begin{enumerate}
		\item[(a)] $x^{k+1} \in \argmin\{\theta_1(x)+\frac{\rho}{2}\Vert Ax+By^k-c+\frac{1}{\rho}\lambda^k\Vert^2:x\in\R^n\}$;
		\item[(b)] $y^{k+1} \in \argmin \{\theta_2(y)+\frac{\rho}{2}\Vert Ax^{k+1}+By-c+\frac{1}{\rho}\lambda^k\Vert^2:y\in\R^q\}$;
		\item[(c)] $\lambda^{k+1} = \lambda^k +\rho(Ax^{k+1}+By^{k+1}-c)$.
	\end{enumerate}	
\end{algorithm}
Note that the computation of $x^{k+1}$ and $y^{k+1}$ in the general step of ADMM may be much more complex than the computation of the proximity operators of $\theta_1$ and $\theta_2$, thus some variants of ADMM have been proposed for dealing with such matters. One of them is the alternating direction proximal method of multipliers (AD-PMM) \cite{MR3719240}, and a special case of AD-PMM is the the alternating direction linearized proximal method
of multipliers (AD-LPMM), which just requires the computation of the proximity operators of $\theta_1$ and $\theta_2$. AD-LPMM for problem (\ref{ADMM}) is show in Algorithm \ref{algo:AD-LPMM}.
\begin{algorithm}[ht]
	\caption{AD-LPMM for problem (\ref{ADMM})}
	\label{algo:AD-LPMM}
	\textbf{Initialization:} $x^0 \in \R^n$, $y^0\in\R^q$, $\lambda^0\in\R^m$, $\rho>0$, $\alpha\geq \rho \lambda_{\max}(A^TA)$, $\beta \geq \rho \lambda_{\max}(B^TB)$.
	
	\textbf{General step:} for $k=0,1,\cdots$ execute the following:
	\begin{enumerate}
		\item[(a)] $x^{k+1} =\text{Prox}_{\frac{1}{\alpha}\theta_1}[x^k-\frac{\rho}{\alpha}A^T(Ax^k+By^k-c+\frac{1}{\rho}\lambda^k)]$;
		\item[(b)] $y^{k+1} =\text{Prox}_{\frac{1}{\beta}\theta_2}[y^k-\frac{\rho}{\beta}B^T(Ax^{k+1}+By^k-c+\frac{1}{\rho}\lambda^k)]$;
		\item[(c)] $\lambda^{k+1} = \lambda^k +\rho(Ax^{k+1}+By^{k+1}-c)$.
	\end{enumerate}	
\end{algorithm}
 
Next, we will introduce our algorithms for problem (\ref{ADMM}).  Consider the lagrangian function of problem (\ref{ADMM}) $$L(x,y,\lambda) = \theta_1(x)+\theta_2(y) -\lambda^T(Ax+By-c).$$ Then $(x^*,y^*,\lambda^*)\in \R^n\times\R^q\times\R^m$ is a saddle point of problem (\ref{ADMM}) if and only if 
$$\min_{\lambda}L(x^*,y^*,\lambda)\leq L(x^*,y^*,\lambda^*) \leq \min_{x,y} L(x,y,\lambda^*),$$
the latter is equivalent to 
\begin{equation*}
\Big[\begin{array}{c}
\theta_1(x)	\\ 
+\theta_2(y)	
\end{array}\Big]
-\Big[\begin{array}{c}
\theta_1(x^*)	\\ 
+\theta_2(y^*)	
\end{array}\Big] + \Bigg[\begin{pmatrix}
x\\ 
y\\ 
\lambda
\end{pmatrix} - \begin{pmatrix}
x^*\\ 
y^*\\ 
\lambda^*
\end{pmatrix}\Bigg]^T
\Bigg[
\begin{pmatrix}
0&0  & -A^T \\ 
0&0  & -B^T \\ 
A& B & 0
\end{pmatrix} \begin{pmatrix}
x\\ 
y\\ 
\lambda
\end{pmatrix}+
\begin{pmatrix}
0\\ 
0\\ 
-c
\end{pmatrix}\Bigg] \geq 0,
\end{equation*} 
for all $(x,y,\lambda)\in \R^n \times \R^q \times \R^m$. This fits $\text{MGVI}(\theta,F)$ with
\begin{equation*}
\begin{array}{ll}
\theta(x,y,\lambda) &= \theta_1(x) +\theta_2(y),  \\
F(x,y,\lambda) &= \begin{pmatrix}
0&0  & -A^T \\ 
0&0  & -B^T \\ 
A& B & 0
\end{pmatrix} \begin{pmatrix}
x\\ 
y\\ 
\lambda
\end{pmatrix}+
\begin{pmatrix}
0\\ 
0\\ 
-c
\end{pmatrix}.
\end{array}
\end{equation*} 
For the theory of saddle point, one can see e.g., \cite{MR2184037}. Now we only need to keep in mind that if $(x^*,y^*,\lambda^*)$ is a saddle point of problem (\ref{ADMM}), then $(x^*,y^*)$ will be a minimizer of problem (\ref{ADMM}).

Note that our algorithms $\text{PGA}_{a_2}$ and $\text{PGA}_{b_2}$ are designed for some specified operators about $F$, which can not be applied for this problem. Moreover, for problem (\ref{ADMM}), owing to the special form of $F$, this causes that the resulting $\text{PGA}_{a_1}$ and $\text{PGA}_{b_1}$ will only differ in the computation of $\alpha_k^*$ when the same self-adaptive rule is employed. For these reasons, we will firstly show GEM and $\text{PGA}_{a_1}$ in Algorithm \ref{algo:GEM-ADMM} and \ref{algo:PGA1-ADMM}, respectively. A self-adaptive rule for $\{\beta^k\}_k$ is also included into these two algorithms. The computation of $\alpha_k^*$ in $\text{PGA}_{b_1}$ for problem (\ref{ADMM}) is given in Algorithm \ref{algo:PGAb1-ADMM}.

\begin{algorithm}[ht]
	\caption{GEM for problem (\ref{ADMM})}
	\label{algo:GEM-ADMM}
	\textbf{Initialization:} $x^0 \in \R^n$, $y^0\in\R^q$, $\lambda^0\in\R^m$, $\beta^0 >0$, and $\nu,\mu\in(0,1)$ with $\mu < \nu$.
	
	\For{$k=0, 1, \cdots$}
	{   
		
		$\tilde{x}^k = \text{Prox}_{\beta^k\theta_1}(x^k+\beta^kA^T\lambda^k);$ 
		
		$\tilde{y}^k = \text{Prox}_{\beta^k \theta_2}(y^k+\beta^kB^T\lambda^k)$;
		
		$\tilde{\lambda}^k = \lambda^k -\beta^k(Ax^k+By^k-c)$;
		
		$r_k = \beta^k\Bigg{\Vert}
		\begin{pmatrix}
		A^T(\lambda^k -\tilde{\lambda}^k)\\		B^T(\lambda^k-\tilde{\lambda}^k)\\ 
		A(x^k-\tilde{x}^k)+B(y^k-\tilde{y}^k)
		\end{pmatrix}\Bigg{\Vert}\Bigg{/}
		\Bigg{\Vert}\begin{pmatrix}
		x^k-\tilde{x}^k\\
		y^k-\tilde{y}^k\\ 
		\lambda^k-\tilde{\lambda}^k\\ 
		\end{pmatrix}\Bigg{\Vert}; $


		\SetKwIF{If}{ElseIf}{Else}{if}{then}{else if}{else}{end}
		\If{$r_k>\nu$}
		{
			$\beta^k := \frac{2}{3}*\beta^k\min\{1,\frac{1}{r_k}\}$;
			
			\textbf{Goto} line 3.
		}
		
		$x^{k+1} = \text{Prox}_{\beta^k\theta_1}(x^k+\beta^kA^T\tilde{\lambda}^k);$ 
		
		$y^{k+1} = \text{Prox}_{\beta^k \theta_2}(y^k+\beta^kB^T\tilde{\lambda}^k)$;
		
		$\lambda^{k+1} = \lambda^k -\beta^k(A\tilde{x}^k+B\tilde{y}^k-c)$;
		
		\SetKwIF{If}{ElseIf}{Else}{if}{then}{else if}{else}{end}
		\eIf{$r_k\leq \mu$}
		{
			$\beta^{k+1} := 1.5*\beta^k$;
		}
		{	
			$\beta^{k+1} := \beta^k$.
		}
		
	}
\end{algorithm}

\begin{algorithm}[ht]
	\caption{$\text{PGA}_{a_1}$ for problem (\ref{ADMM})}
	\label{algo:PGA1-ADMM}
	\textbf{Initialization:} $x^0 \in \R^n$, $y^0\in\R^q$, $\lambda^0\in\R^m$, $\beta^0 >0$, $\nu \in (0,1)$, and $\gamma \in (0,2)$.
	
	\For{$k=0, 1, \cdots$}
	{     
		$\tilde{x}^k = \text{Prox}_{\beta^k\theta_1}(x^k+\beta^kA^T\lambda^k);$
		 
		$\tilde{y}^k = \text{Prox}_{\beta^k \theta_2}(y^k+\beta^kB^T\lambda^k);$
		
		$\tilde{\lambda}^k = \lambda^k -\beta^k(Ax^k+By^k-c);$
		
		$r_k = \beta^k\Bigg{\Vert}
		\begin{pmatrix}
		A^T(\lambda^k -\tilde{\lambda}^k)\\		B^T(\lambda^k-\tilde{\lambda}^k)\\ 
		A(x^k-\tilde{x}^k)+B(y^k-\tilde{y}^k)
		\end{pmatrix}\Bigg{\Vert}\Bigg{/}
		\Bigg{\Vert}\begin{pmatrix}
		x^k-\tilde{x}^k\\
		y^k-\tilde{y}^k\\ 
		\lambda^k-\tilde{\lambda}^k\\ 
		\end{pmatrix}\Bigg{\Vert}; $
		
		\If{$r_k>\nu$}
		{
			$\beta^k := \frac{2}{3}*\beta^k\min\{1,\frac{1}{r_k}\}$;
			
			\textbf{Goto} line 3.
		}
		
		$\alpha_k^* = \Bigg{\Vert} 
		\begin{pmatrix}
		x^k-\tilde{x}^k\\
		y^k-\tilde{y}^k\\
		\lambda^k-\tilde{\lambda}^k
			\end{pmatrix} \Bigg{\Vert}^2\Bigg{/}  \Bigg{\Vert}\begin{pmatrix}
				[x^k-\tilde{x}^k]+\beta^k[A^T(\lambda^k-\tilde{\lambda}^k)]\\ 
				[y^k-\tilde{y}^k]+\beta^k[B^T(\lambda^k-\tilde{\lambda}^k)]\\
				[\lambda^k-\tilde{\lambda}^k]-\beta^k[A(x^k-\tilde{x}^k)+B(y^k-\tilde{y}^k)]\\
			\end{pmatrix}\Bigg{\Vert}^2 $;
		
		$x^{k+1} = x^k-\gamma\alpha_k^*\left([x^k-\tilde{x}^k]+\beta^k[A^T(\lambda^k-\tilde{\lambda}^k)]\right)$;
		
		$y^{k+1} = y^k -\gamma\alpha_k^*\left([y^k-\tilde{y}^k]+\beta^k[B^T(\lambda^k-\tilde{\lambda}^k)]\right)$;
		
		$\lambda^{k+1} = \lambda^k - \gamma \alpha_k^*\left([\lambda^k-\tilde{\lambda}^k]-\beta^k[A(x^k-\tilde{x}^k)+B(y^k-\tilde{y}^k)]\right)$;
		
		\SetKwIF{If}{ElseIf}{Else}{if}{then}{else if}{else}{end}
		\eIf{$r_k\leq \mu$}
		{
			$\beta^{k+1} := 1.5*\beta^k$;
		}
		{	
			$\beta^{k+1} := \beta^k$.
		}
}
	
\end{algorithm} 

\begin{algorithm}[ht]
	\caption{$\text{PGA}_{b_1}$ for problem (\ref{ADMM})}
	\label{algo:PGAb1-ADMM}
	$\cdots$
	
	$\alpha_k^* = \frac{\begin{pmatrix}
		x^k-\tilde{x}^k	\\
		y^k-\tilde{y}^k \\
		\lambda^k-\tilde{\lambda}^k	\\ 
		
	\end{pmatrix}^T 
	\begin{pmatrix}
		[x^k-\tilde{x}^k]+[\beta^kA^T(\lambda^k-\tilde{\lambda}^k)]\\ 
		[y^k-\tilde{y}^k]+[\beta^kB^T(\lambda^k-\tilde{\lambda}^k)]\\
		[\lambda^k-\tilde{\lambda}^k]-\beta^k[A(x^k-\tilde{x}^k)+B(y^k-\tilde{y}^k)]\\
	\end{pmatrix}
}{ \Bigg{\Vert}\begin{pmatrix}
	[x^k-\tilde{x}^k]+[\beta^kA^T(\lambda^k-\tilde{\lambda}^k)]\\ 
	[y^k-\tilde{y}^k]+[\beta^kB^T(\lambda^k-\tilde{\lambda}^k)]\\
	[\lambda^k-\tilde{\lambda}^k]-\beta^k[A(x^k-\tilde{x}^k)+B(y^k-\tilde{y}^k)]\\
\end{pmatrix}\Bigg{\Vert}^2 }$;

	$\cdots$

\end{algorithm}

In the preceding part of this section, we have introduced five algorithms for problem (\ref{ADMM}). Now, let us take a close look at them. Firstly, ADMM (Algorithm \ref{algo:ADMM}) may not be implementable in practice because compared with the computation of proximity operators of $\theta_1$ and $\theta_2$, it may be much harder to solve the next two convex optimization problems
\begin{equation*}
\begin{array}{l}
\min\{\theta_1(x)+\frac{\rho}{2}\Vert Ax+By^k-c+\frac{1}{\rho}\Vert^2:x\in \R^n\},\\
\min\{\theta_2(y)+\frac{\rho}{2}\Vert Ax^{k+1}+By-c+\frac{1}{\rho}\Vert^2:y\in \R^n\}.
\end{array}
\end{equation*}
Next, let us look at AD-LPMM (Algorithm \ref{algo:AD-LPMM}) and the other three algorithms proposed by us. Firstly, at iteration $k$, AD-LPMM  solves $(x^{k+1},y^{k+1},\lambda^{k+1})$ in order, while the other three algorithms solve $(\tilde{x}^{k},\tilde{y}^{k}, \tilde{\lambda}^{k})$ and $(x^{k+1},y^{k+1},\lambda^{k+1})$ in parallel, this is a significant characteristic of our algorithms, allowing lesser time exhausted by employing the technique of parallel computation, especially in case of large-scale multi-blocks separable problems. Secondly, although the proximity operators of $\theta_1$ and $\theta_2$ are used by all of the four algorithms,  more computation is involved by AD-LPMM in updating $x$ and $y$ because it requires to compute $A^TAx$ and $A^TBy$, while our algorithms only require the computation of $A^T\lambda$ or $B^T\lambda$. Finally, AD-LPMM additionally evaluates in advance the maximal eigenvalues of $A^TA$ and $B^TB$, which is not necessary for our algorithms. Now, let us look at our proposed algorithms $\text{PGA}_{a_1}$, $\text{PGA}_{b_1}$, and GEM. The predictor steps are the same for all of them, who use the proximity operators of $\theta_1$ and $\theta_2$ for making the predictors. In corrector step, GEM employs again the proximity operators for obtaining the next surrograte $(x^{k+1},y^{k+1},\lambda^{k+1})$, while for the other two algorithms, only some cheap matrix-vector products, additions, and so on are involved. Thus, when the computation of proximity operators of $\theta_1$ and $\theta_2$ requires some iterative techniques, the corrector step of GEM will be more complex than that of $\text{PGA}_{a_1}$ and $\text{PGA}_{b_1}$. Finally, we point out that the value of $\alpha_k^*$ in $\text{PGA}_{b_1}$ is lesser or equal to its counterpart in $\text{PGA}_{a_1}$.
\section{Numerical Simulations}\label{sec:6}
In this section, we report some numerical results of our proposed algorithms. Our algorithms are implemented in MATLAB. The experiments are performed on a laptop equipped with Intel i$5$-$6200$U $2.30$GHz CPU and $8$ GB RAM. 

\subsection{First Example}
\label{sec:51}
Consider the next $l_1$-regularized least square problem
\begin{equation}
\min_{x\in\R^n} f(x):= \frac{1}{2}\Vert Ax-b\Vert^2 +\lambda \Vert x\Vert_1,
\end{equation}
where $A\in \R^{m\times n}$, $b\in\R^m$, and $\lambda >0$. This problem fits $\text{MGVI}(\theta,F)$ with 
\begin{equation*}
\begin{array}{l}
\theta(x) = \lambda \Vert x\Vert_1,\\
F(x) = A^T(Ax-b).
\end{array}
\end{equation*}
We generate an instance  of the  problem with $\lambda =1$ and $A\in\R^{1000\times 1100}$ for showing the  iterative shrinkage-thresholding algorithm (ISTA) and our proposed algorithms GME, $\text{PGA}_{a_1}$, $\text{PGA}_{a_2}$, $\text{PGA}_{b_1}$, $\text{PGA}_{b_2}$. The components of $A$ are independently generated by using the standard normal distribution, the "true" vector $\text{x\_{true}}$ is a sparse vector with 20 non-zeros elements, which is generated by the following matlab command:
	\begin{lstlisting}
		x_true = zeros(n,1);
		x_true(3:8:80) = 1;
		x_true(7:8:80) = -1;
	\end{lstlisting}
then we set $b:=A\text{x\_{true}}$. The initial point for all of the algorithms are $x=\textbf{e}$, the vector of all ones. The stopping criterion for all of them is $$\Vert x^k-\text{Prox}_{\theta}(x^k-F(x^k))\Vert_{\infty} < 10^{-6}.$$  

The trends about the values of $f$ of ISTA, GME, $\text{PGA}_{a_1}$, $\text{PGA}_{a_2}$, $\text{PGA}_{b_1}$, $\text{PGA}_{b_2}$ on $l_1$-regularized least squares problem are shown in Fig. \ref{fig:myplot3}. The number of iteration and CPU time for all of thm is shown in Table \ref{tab:1}.   All of them recover the "true" vector. 

\begin{figure}
	\centering
	\includegraphics[width=0.7\linewidth]{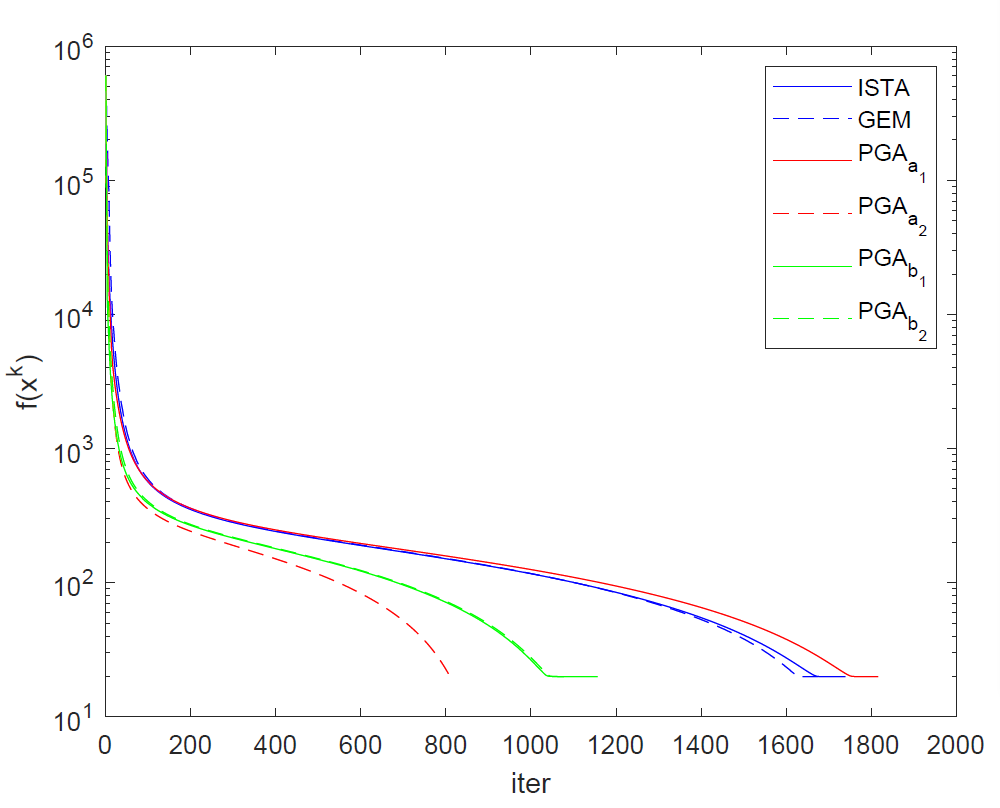}
	\caption{Trends about $f$ of ISTA, GME, $\text{PGA}_{a_1}$, $\text{PGA}_{a_2}$, $\text{PGA}_{b_1}$, $\text{PGA}_{b_2}$ on $l_1$-regularized least squares problem.}
	\label{fig:myplot3}
\end{figure}

\begin{table}[ht]
	\caption{The number of iteration and time of ISTA, GME, $\text{PGA}_{a_1}$, $\text{PGA}_{a_2}$, $\text{PGA}_{b_1}$, $\text{PGA}_{b_2}$ on $l_1$-regularized least squares problem.}
	\label{tab:1}       
	\begin{center}
	\begin{tabular}{ccc}
		\hline\noalign{\smallskip}
		 & No. of iteration & CPU time (seconds)  \\
		\noalign{\smallskip}\hline\noalign{\smallskip}
		ISTA               & 1739 & 5.36  \\
		GEM                & 1682 & 11.10 \\
		$\text{PGA}_{a_1}$ & 1816 & 24.93 \\
		$\text{PGA}_{a_2}$ & \textbf{822}  & 4.20  \\
		$\text{PGA}_{b_1}$ & 1157 & 7.31  \\
		$\text{PGA}_{b_2}$ & 1085 & \textbf{3.38}  \\
		\noalign{\smallskip}\hline
	\end{tabular}
	\end{center}
\end{table}

\subsection{Second Example}
Consider the next basis pursuit problem
\begin{equation}
\label{basis}
\begin{split}
\min\quad &\Vert x\Vert_1 \\
\text{s.t.}\quad  & Ax=b, \\
\end{split}
\end{equation}
where $A\in\R^{m\times n}$, $b\in\R^m$. This problem matches $\text{MGVI}(\theta,F)$ with $\theta(x,\lambda) = \Vert x\Vert_1$, and $F(x,\lambda) = \begin{pmatrix}
0& -A^T \\ 
A& 0
\end{pmatrix} 
\begin{pmatrix}
x\\
\lambda 
\end{pmatrix} +
\begin{pmatrix}
0\\
-b
\end{pmatrix}.
$
We use the same data as in the preceding example for showing AD-LPMM , and our algorithms GME, $\text{PGA}_{a_1}$, $\text{PGA}_{b_1}$. The initial point for all the algorithms are $x=\textbf{e}$, the vector of all ones, and $\lambda = \textbf{0}$, the vector of all zeros. The stopping criterion for all the algorithms except AD-LPMM are $\Vert (x^k,\lambda^k)-\text{Prox}_{\theta}\left((x^k,\lambda^k)-F(x^k,\lambda^k)\right)\Vert_{\infty} < 10^{-6}$, while that for AD-LPMM is $\Vert (x^{k+1},\lambda^{k+1})-(x^k,\lambda^k)\Vert_{\infty} < 10^{-6}$. Results of AD-LPMM, $\text{PGA}_{a_1}$, $\text{PGA}_{b_1}$, GEM on basis pursuit problem are shown in Table \ref{tab:2}, where we show the number of iteration and CPU time for all of them. All of these algorithms recover the "true" vector.

\begin{table}[ht]
	\caption{The number of iteration and time of AD-LPMM, GEM, $\text{PGA}_{a_1}$, $\text{PGA}_{b_1}$ on basis pursuit problem.}
	\label{tab:2}       
	\begin{center}
		\begin{tabular}{ccc}
			\hline\noalign{\smallskip}
			& No. of iteration & CPU time (seconds)  \\
			\noalign{\smallskip}\hline\noalign{\smallskip}
			AD-LPMM & 1773 & 18.65 \\
			GEM                & \textbf{105} & \textbf{4.65} \\
			$\text{PGA}_{a_1}$ & 225 & 7.58 \\
			$\text{PGA}_{b_1}$ & 226 & 7.05  \\
			\noalign{\smallskip}\hline
		\end{tabular}
	\end{center}
\end{table}

\section{Conclusion}
\label{sec:7}
In this paper, we propose some PGA-based Predictor-Corrector algorithms for solving $\text{MGVI}(\theta,F)$. Our works are inspired by the extragradient method and the projection and contraction algorithms for $\text{MVI}(\theta,F,\C)$. The form of our algorithms is also simple, and one of their significant characteristic for separable multi-blocks convex optimization problems is that they can be well adapted for parallel computation, allowing our algorithms can be well applied in large-scale settings. Preliminary simulations of our algorithms in the sparsity recovery models show their effectiveness when compared with some existing algorithms.

\begin{acknowledgements}
My special thanks should go to my friend, Kuan Lu, who has put considerable time and effort into his comments on the draft.
\end{acknowledgements}

\bibliographystyle{spmpsci}
\bibliography{mybib}   

%
%

\end{document}